\newcommand{\rl}{\mathbb{R}}
\newcommand{\cx}{\mathbb{C}}
\newcommand{\ai}{\sqrt{-1}}
\newcommand{\cst}{\mathrm{const}}
\newcommand{\kah}{K\"ahler }
\newcommand{\ke}{K\"ahler--Einstein }
\newcommand{\ddbar}{\partial \bar{\partial}}
\theoremstyle{plain}% default
\newtheorem{theorem}{Theorem}[section]
\newtheorem{lemma}[theorem]{Lemma}
\newtheorem{proposition}[theorem]{Proposition}
\newtheorem{corollary}[theorem]{Corollary}
\theoremstyle{definition}
\newtheorem{definition}[theorem]{Definition}
\theoremstyle{definition}
\newtheorem{remark}[theorem]{Remark}
\newtheorem{problem}[theorem]{Problem}
\newtheorem{notation}[theorem]{Notation}
\newtheorem{question}[theorem]{Question}
\begin{document}

\title{Existence of twisted constant scalar curvature K\"ahler metrics with a large twist}
\author{Yoshinori Hashimoto}

\maketitle

\begin{abstract}
Suppose that there exist two K\"ahler metrics $\omega$ and $\alpha$ such that the metric contraction of $\alpha$ with respect to $\omega$ is constant, i.e.~$\Lambda_{\omega} \alpha = \text{const}$. We prove that for all large enough $R>0$ there exists a twisted constant scalar curvature K\"ahler metric $\omega'$ in the cohomology class $[ \omega ]$, satisfying $S(\omega' ) - R \Lambda_{\omega' }  \alpha = \text{const}$. We discuss its implication to $K$-stability and the continuity method recently proposed by X.X.~Chen.
\end{abstract}

\tableofcontents

\section{Introduction and the statement of the results}

\subsection{Twisted constant scalar curvature K\"ahler metrics and a continuity method}
Let $(X, \omega)$ be a compact \kah manifold without boundary. The existence of a ``canonical'' \kah metric, such as constant scalar curvature \kah (cscK) metrics satisfying $S(\omega) = \cst$, in a given cohomology class has been a central problem in \kah geometry. On the other hand, the cscK equation $S(\omega) = \cst$ is a fourth order fully nonlinear partial differential equation (PDE), and is difficult to solve in general.

A well-known method in solving a nonlinear PDE is the \textit{continuity method}. X.X.~Chen \cite{chentwisted} recently proposed a continuity method which can be seen as a generalisation to the cscK case of the Aubin--Yau continuity method that is well-known in \ke problems (cf.~\cite{aub, yaucy}). In this continuity method, we consider the following problem.

\begin{problem} {(X.X.~Chen \cite{chentwisted})} \label{tcontpath}
Let $t \in [0,1]$ be a parameter and $\alpha$ be a closed smooth positive real $(1,1)$-form. Find a family $\{ \omega_t \}_{t \in [0,1]}$ of smooth \kah metrics in a fixed cohomology class which satisfy
\begin{equation} \label{tcsckaycp}
t S(\omega_t) - (1-t) \Lambda_{\omega_t} \alpha = \cst ,
\end{equation}
where $\Lambda_{\omega_t}$ means the metric contraction with respect to $\omega_t$.
\end{problem}

Suppose that we write $I \subset [0,1]$ for the set of time parameter $t$ for which (\ref{tcsckaycp}) is solvable. The solution to Problem \ref{tcontpath} can be achieved in the following three steps.

\begin{enumerate}
\item Show that $I$ is \textit{nonempty}; i.e.~show that Problem \ref{tcontpath} can be solved at time $t=0$, or solve $\Lambda_{\omega_0} \alpha = \cst$ for $\omega_0$ when $\alpha$ is given. This is an interesting problem in its own right, in relation to the $J$-flow \cite{chenjflow, colsze, dervankeller, donjflow, lejsze, songweinkove, weinkovesurfaces, weinkovemab}. On the other hand we observe that, when we can assume that $\omega_0$ and $\alpha$ are in the same cohomology class, we can simply take $\omega_0 := \alpha$ to solve this equation.

\item Show that $I$ is \textit{open}; the {openness} of the problem at $t \in [0,1)$ means that, if (\ref{tcsckaycp}) is solvable at time $t \in [0,1)$, there exists $\epsilon_0 >0$ such that (\ref{tcsckaycp}) can be solved for each $t' \in (t - \epsilon , t + \epsilon) \cap [0,1)$ for all $0< \epsilon \le \epsilon_0$.

\item Show that $I$ is \textit{closed}; the {closedness} of the problem means that, for every convergent sequence $\{ t_i \}_i$ in $[0,1]$, if (\ref{tcsckaycp}) can be solved for each $t_i$ then it can be solved for $t_{\infty} = \lim_i t_i$.
\end{enumerate}

Observe that the solution of Problem \ref{tcontpath} at $t=1$ is precisely equal to solving the cscK equation $S(\omega_1) = \cst$; we thus reduced the problem of finding a cscK metric to establishing the above, hopefully more manageable, three steps in Problem \ref{tcontpath}.

%solve the ``easier'' problem of solving $\Lambda_{\omega_0} \alpha = \cst$ at $t=0$, and reduce the difficult problem of finding a cscK metric to solving the openness and closedness of Problem \ref{tcontpath}.

The main result of this paper is the openness at $t =0$ (cf.~Remark \ref{opnt0tn0}), stated as follows, which solves the question \cite[Question 1.6]{chentwisted} posed by X.X.~Chen in the affirmative.

\begin{theorem} \label{mrtcsck}
Suppose that we have two \kah metrics $\omega$ and $\alpha$ satisfying $\Lambda_{\omega} \alpha = \cst$. Then there exists a constant $R(\omega , \alpha) >0$ depending only on $\omega$ and $\alpha$, so that for all $R \ge R(\omega , \alpha)$ there exists $\phi  \in C^{\infty} (X , \rl)$ such that $\omega_{\phi } := \omega + \ai \ddbar \phi$ satisfies $S(\omega_{\phi}) -  \Lambda_{\omega_{\phi}} (R \alpha) = \cst$.% for all $R \ge R(\omega , \alpha)$.
\end{theorem}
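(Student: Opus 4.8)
\noindent The plan is to treat the equation as a singular perturbation of the $J$-equation $\Lambda_\omega\alpha=\cst$ as the twist grows large. Setting $\epsilon:=1/R$ and replacing the constant by its (topologically forced) value $\epsilon\underline S-\Lambda_\omega\alpha$, where $\underline S$ is the average of $S(\omega)$, the equation of the theorem becomes
\[
\mathcal E_\epsilon(\phi)\;:=\;\epsilon\bigl(S(\omega_\phi)-\underline S\bigr)-\bigl(\Lambda_{\omega_\phi}\alpha-\Lambda_\omega\alpha\bigr)\;=\;0,\qquad \omega_\phi=\omega+\ai\ddbar\phi,
\]
and by hypothesis $\phi=0$ solves $\mathcal E_0(\phi)=0$. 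The obstruction to a naive implicit function argument is that $\mathcal E_\epsilon$ is fourth order for $\epsilon>0$ but collapses to the second order $J$-operator at $\epsilon=0$, so that its linearisation at $(\phi,\epsilon)=(0,0)$ is not an isomorphism onto any Hölder space on which $\mathcal E_\epsilon$ is defined. Handling this mismatch of orders uniformly in $\epsilon$ is the crux.

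First I would extract the algebraic content of the hypothesis. The linearisation of the $J$-operator at $\omega$, namely $Q\psi:=D_\phi\!\left(\Lambda_{\omega_\phi}\alpha\right)\big|_0(\psi)=-\langle\alpha,\ai\ddbar\psi\rangle_\omega$, is a second order elliptic operator, and precisely because $\Lambda_\omega\alpha=\cst$ it is self-adjoint and non-negative with respect to $\omega^n$ --- it is the Hessian, at its minimiser $\omega$, of the geodesically convex $J$-functional --- with kernel exactly the constants; hence $Q$ has a positive spectral gap $\mu_0>0$ on the functions of $\omega^n$-mean zero. Consequently the linearisation of $\mathcal E_\epsilon$ at $\phi=0$, which is $L_\epsilon:=\epsilon\,DS|_0-Q=-\epsilon\,\lich{\omega}+\epsilon\,(\text{lower order})-Q$, obeys the uniform coercivity $\langle-L_\epsilon\psi,\psi\rangle\ge\tfrac{\epsilon}{2}\|\mathcal D_\omega\psi\|^2+\tfrac{\mu_0}{4}\|\psi\|^2$ on $\omega^n$-mean zero functions for all small $\epsilon$, and is (modulo a one-dimensional kernel and cokernel) an isomorphism for $\epsilon>0$.

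Then I would carry out the construction by a contraction mapping argument in Hölder norms adapted to the length scale $\sqrt\epsilon$ at which the two summands of $L_\epsilon$ balance, namely $\|u\|_{C^{k,\gamma}_\epsilon}:=\sum_{j=0}^k\epsilon^{j/2}\sup|\nabla^j u|+\epsilon^{(k+\gamma)/2}[\nabla^k u]_{C^{0,\gamma}}$, which form a Banach algebra with constants uniform in $\epsilon$. The main obstacle, and the real work, is the $\epsilon$-uniform Schauder-type estimate $\|L_\epsilon^{-1}f\|_{C^{k+4,\gamma}_\epsilon}\le C\epsilon\,\|f\|_{C^{k,\gamma}_\epsilon}$ on the (corrected) mean-zero subspace: I would prove it by rescaling to balls of radius $\sqrt\epsilon$, on which $L_\epsilon$ becomes a fixed, $\epsilon$-independent fourth order elliptic operator to which the interior Schauder estimate applies, then patching over a cover of $X$ and absorbing the lower order and global error terms by means of the coercivity above and interpolation. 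Granting this, put $T_\epsilon(\phi):=-L_\epsilon^{-1}\Pi\bigl(\mathcal E_\epsilon(\phi)-L_\epsilon\phi\bigr)$ with $\Pi$ the projection onto the image of $L_\epsilon$; one checks that the initial error $\mathcal E_\epsilon(0)=\epsilon(S(\omega)-\underline S)$ is $O(\epsilon)$, so $\|T_\epsilon(0)\|_{C^{k+4,\gamma}_\epsilon}=O(\epsilon^2)$, while on a ball of radius $\rho_\epsilon\sim\epsilon^2$ in $C^{k+4,\gamma}_\epsilon$ the differential of the nonlinear part $\mathcal E_\epsilon-L_\epsilon-\mathcal E_\epsilon(0)$ --- a tame map in these algebra norms --- has operator norm $\lesssim\epsilon^{-1}\rho_\epsilon$, so $T_\epsilon$ is a contraction there. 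Hence for $\epsilon$ below a threshold depending only on $\omega$ and $\alpha$ there is a fixed point $\phi_\epsilon$, with $\|\phi_\epsilon\|_{C^0}+\|\ai\ddbar\phi_\epsilon\|_{C^0}=O(\epsilon)$ so that $\omega_{\phi_\epsilon}>0$; it solves $\Pi\,\mathcal E_\epsilon(\phi_\epsilon)=0$, and integrating the equation against $\omega_{\phi_\epsilon}^n$ (against which both $S(\omega_{\phi_\epsilon})$ and $\Lambda_{\omega_{\phi_\epsilon}}\alpha$ integrate to topological constants) forces the residual constant to vanish, so $\mathcal E_\epsilon(\phi_\epsilon)=0$. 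Finally, for each fixed $\epsilon>0$ the equation is fourth order elliptic, so elliptic bootstrapping makes $\phi_\epsilon$ smooth; taking $R=1/\epsilon$ yields the theorem, with $R(\omega,\alpha)$ the reciprocal of the threshold.

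An alternative, more structural route --- which I would record but which relies on heavier machinery --- is variational: after the same reduction one shows that the twisted Mabuchi functional $\mathcal M_{R\alpha}=\mathcal M+R\,\mathcal J_\alpha$ is coercive once $R$ is large, since $\mathcal M$ has at most linear decay in the $d_1$-metric while the solvability of $\Lambda_\omega\alpha=\cst$ makes $\mathcal J_\alpha$ linearly $d_1$-proper, and then invokes the existence-from-coercivity theorem for twisted cscK metrics. The first route has the advantages of being essentially self-contained and of exhibiting $\omega_\phi$ explicitly as a small perturbation of $\omega$.
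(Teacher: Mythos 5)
Your overall strategy is the same as the paper's: linearise about the solution of the $J$-equation, exploit the fact that the linearised $J$-operator $Q$ (the paper's $-F_{\omega,\alpha}$) is self-adjoint and has an $\epsilon$-independent spectral gap $\mu_0>0$ on mean-zero functions (Lemma~\ref{lemlinojfcsasorlok}), and invoke an implicit/inverse function theorem. However, the quantitative heart of your argument does not close as written. The claimed $\epsilon$-uniform bound $\|L_\epsilon^{-1}f\|_{C^{k+4,\gamma}_\epsilon}\le C\epsilon\|f\|_{C^{k,\gamma}_\epsilon}$ is false. Your rescaling to balls of radius $\sqrt{\epsilon}$ does give a local Schauder estimate with an $\epsilon$ gain on the elliptic side, of the form $\|u\|_{C^{k+4,\gamma}_\epsilon}\le C\left(\epsilon\|L_\epsilon u\|_{C^{k,\gamma}_\epsilon}+\|u\|_{C^0}\right)$ after patching; but the zeroth-order term $\|u\|_{C^0}$ can only be absorbed via the $L^2$ spectral gap, and that gap is $\mu_0=O(1)$, not $O(1/\epsilon)$. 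Concretely, if $f$ is a fixed smooth mean-zero function then $L_\epsilon^{-1}f=-Q^{-1}f+O(\epsilon)$, whose $C^0$ norm is of order $1/\mu_0$ independently of $\epsilon$, while $\|f\|_{C^{k,\gamma}_\epsilon}=O(1)$; so the true operator norm is $O(1)$, not $O(\epsilon)$. (Your observation $L_\epsilon=\epsilon\cdot(\textrm{paper's linearisation})$ is consistent with the paper's unweighted bound $\|P_R\|_{op}\le C'R$: that gives $\|L_\epsilon^{-1}\|\sim \epsilon^{-1}\cdot R=R^2$ in Sobolev norms, and passing to the $\sqrt{\epsilon}$-weighted H\"older norms trades the elliptic blow-up for the $O(1)$ spectral cost but does not produce a gain.)

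With the corrected bound $\|L_\epsilon^{-1}\|=O(1)$ the contraction does not close: in your weighted norms the quadratic part of $\Lambda_{\omega_\phi}\alpha$, being a product of two $\partial\bar\partial\phi$'s, loses a factor $\epsilon^{-2}$, so its differential on a ball of radius $\rho$ in $C^{k+4,\gamma}_\epsilon$ has norm $\lesssim\epsilon^{-2}\rho$. One is thus forced to take $\rho\lesssim\epsilon^{2}$, whereas the initial error $\|L_\epsilon^{-1}\mathcal{E}_\epsilon(0)\|$ from the trivial approximate solution $\phi=0$ is only $O(\epsilon)\gg\epsilon^2$, so $T_\epsilon(0)$ lies outside the contraction ball. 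This is exactly the point the paper addresses by \emph{not} starting the inverse function theorem at $\phi=0$: it first runs the iterative scheme of Lemma~\ref{lemsolvoboR}, repeatedly inverting $F_{\omega,\alpha}$ to build $\omega_m=\omega+\ai\ddbar(R^{-1}\phi_1+\cdots+R^{-m}\phi_m)$ and a nearby twist $\alpha_m$ so that $\omega_m$ is \emph{exactly} $R\alpha_m$-twisted cscK with $\alpha_m-\alpha=O(R^{-m})$, and then applies the quantitative inverse function theorem at $(\alpha_m,\omega_m)$. There the inverse bound $\|P_R\|\lesssim R$ together with a Lipschitz radius $\delta'=R^{-l}$ force the initial error to be $O(R^{-l-2})$, which the choice of large $m$ provides. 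This higher-order approximate solution is the missing ingredient in your proposal; your variational side remark is a genuinely different route but is not developed.
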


\begin{remark}
As the preparation of this paper was nearing completion, the author learned that Yu Zeng \cite{zeng} independently proved a special case of the above theorem when we take $\alpha = \omega$.
\end{remark}

%When $t \neq 0,1$, we thus need to study the following equation

%It is well-known that such a canonical metric does not always exist, and a celebrated programme initiated by Yau, Tian, and Donaldson conjectures that the existence of cscK metric should be equivalent to some form of algebro-geometric ``stability''. It is widely believed that $K$-stability, or technical modifications thereof, should be the ``right'' stability condition.

%Given another \kah metric $\alpha$, we study the equation

Recall that, for a closed positive real $(1,1)$-form $\alpha$, a \kah metric $\omega_{\phi}$ satisfying the equation
\begin{equation} \label{tcsck}
S(\omega_{\phi}) - \Lambda_{\omega_{\phi}} \alpha = \cst 
\end{equation}
is said to be of \textbf{$\alpha$-twisted constant scalar curvature K\"ahler} or \textbf{$\alpha$-twisted cscK}. Twisted cscK metrics were applied to the study of (genuine) cscK metrics by means of adiabatic construction \cite{finesurfaces, finefibration}, and are an interesting object in their own right \cite{dertwisted, kellertwisted, stoppatwisted}. It is also known that an $\alpha$-twisted cscK metric is unique in each cohomology class \cite[Theorem 4.5]{bb}, and this was used to prove the uniqueness of cscK metrics modulo automorphisms \cite[Theorem 1.3]{bb}. Naively re-phrasing, Theorem \ref{mrtcsck} ensures the existence of $R \alpha$-twisted cscK metrics on any compact \kah manifold, assuming $\Lambda_{\omega} \alpha = \cst$ and taking $R>0$ to be sufficiently large; in particular, this implies that we can always find an $R \alpha$-twisted cscK metric on any compact \kah manifold in the cohomology class $[ \alpha ]$, by taking $\omega = \alpha$.

%which is the main object of study of this paper. $\omega_{\phi}$ satisfying the above equation is called an \textbf{$\alpha$-twisted constant scalar curvature K\"ahler} metric or \textbf{$\alpha$-twisted cscK} metric, and there has been a lot of work on this. 

We shall also see that the proof of Theorem \ref{mrtcsck} can be easily applied to prove the openness at $t \in (0,1)$; recall on the other hand that the openness at $t \in (0,1)$ was also obtained independently by X.X.~Chen \cite[Theorem 1.5]{chentwisted}. In fact, we can prove the following slightly stronger result.

\begin{corollary} \label{cormroptwa}
Suppose that $\omega$ is $\alpha'$-twisted cscK. Then, if $\alpha' - \alpha$ is sufficiently small in the $C^{\infty}$-norm, there exists an $\alpha$-twisted cscK metric in the cohomology class $[\omega]$.
\end{corollary}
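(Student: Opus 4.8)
The plan is to treat the twist form itself as the deformation parameter and run the implicit function theorem in Hölder spaces, using as the essential analytic input the invertibility of the linearised twisted cscK operator that is established in the course of proving Theorem~\ref{mrtcsck}. Fix $k\ge 1$ and $\gamma\in(0,1)$, set $V=\int_X\omega^n$, and for a closed real $(1,1)$-form $\beta$ let
\[
c(\beta)=\frac{1}{V}\left(\int_X S(\omega)\,\omega^n-n\int_X\beta\wedge\omega^{n-1}\right),
\]
the cohomological constant, depending only on the classes $[\omega]$ and $[\beta]$, that any $\beta$-twisted cscK metric in $[\omega]$ must satisfy. For $\phi\in C^{k+4,\gamma}(X,\rl)$ near $0$ and $\beta$ near $\alpha'$ in $C^{k,\gamma}$ — close enough that $\omega_\phi:=\omega+\ai\ddbar\phi>0$ and $\beta>0$ — I would then consider
\[
\mathcal{F}(\phi,\beta)=S(\omega_\phi)-\Lambda_{\omega_\phi}\beta-c(\beta)\in C^{k,\gamma}(X,\rl),
\]
so that, for fixed $\beta$, the solutions $\phi$ of $\mathcal{F}(\phi,\beta)=0$ are precisely the $\beta$-twisted cscK metrics in $[\omega]$; note $\mathcal{F}(\phi,\beta)$ has zero $\omega_\phi^n$-average for all $(\phi,\beta)$, and $\mathcal{F}(0,\alpha')=0$ by hypothesis.

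The heart of the argument is the invertibility of $L:=D_\phi\mathcal{F}(0,\alpha')$, which is exactly the linearisation of the $\alpha'$-twisted cscK operator at the solution $\omega$. Since $S(\omega)-\Lambda_\omega\alpha'=\cst$, the first-order term in this linearisation — the one carrying $d(S(\omega)-\Lambda_\omega\alpha')$ — vanishes, and the standard computation of the linearisation (the same one underlying the proof of Theorem~\ref{mrtcsck}) then shows that $L$ is formally self-adjoint with respect to $\omega^n$ and that
\[
\langle L\psi,\psi\rangle_{L^2}=-\|\mathcal{D}_\omega\psi\|^2_{L^2}-\int_X\alpha'\bigl(\nabla^{1,0}\psi,\overline{\nabla^{1,0}\psi}\bigr)\,\omega^n\le 0,
\]
the two terms coming respectively from $\lich{\omega}$ and from the linearisation of $-\Lambda_{\omega_\phi}\alpha'$. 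Because $\alpha'$ is \emph{strictly} positive, the second term vanishes only when $\bar\partial\psi=0$, so $L\psi=0$ forces $\psi$ to be constant. As $L$ is elliptic of order four (hence Fredholm of index $0$) and self-adjoint with $\ker L=\rl\cdot 1$, its image equals the $L^2$-orthogonal complement of $\ker L$, so $L$ restricts to a Banach-space isomorphism from $\{\psi\in C^{k+4,\gamma}(X,\rl):\int_X\psi\,\omega^n=0\}$ onto $\{f\in C^{k,\gamma}(X,\rl):\int_X f\,\omega^n=0\}$; up to the harmless discrepancy between the $\omega_\phi^n$- and $\omega^n$-averages near $\phi=0$, this last space is exactly where $\mathcal{F}$ takes values.

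Granting this, the implicit function theorem supplies $\delta>0$ and a smooth map $\beta\mapsto\phi(\beta)$, defined for $\|\beta-\alpha'\|_{C^{k,\gamma}}<\delta$, with $\phi(\alpha')=0$ and $\mathcal{F}(\phi(\beta),\beta)=0$; after shrinking $\delta$ one still has $\omega_{\phi(\beta)}>0$ and $\beta>0$. If $\|\alpha-\alpha'\|_{C^\infty}$, hence $\|\alpha-\alpha'\|_{C^{k,\gamma}}$, is smaller than $\delta$, then $\omega_{\phi(\alpha)}$ is an $\alpha$-twisted cscK metric in $[\omega]$, and elliptic bootstrapping applied to $S(\omega_{\phi(\alpha)})-\Lambda_{\omega_{\phi(\alpha)}}\alpha=c(\alpha)$ upgrades $\phi(\alpha)$ to a smooth function, which finishes the proof. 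I expect the only genuinely non-formal point to be the invertibility of $L$ — and, within it, the way the strict positivity of the twist $\alpha'$ removes the obstruction coming from holomorphy potentials that blocks the analogous statement for the untwisted cscK equation; this is precisely the mechanism behind Theorem~\ref{mrtcsck}, so in practice the corollary is obtained by quoting the linearised analysis there. A minor bookkeeping point worth flagging is that the normalising constant $c(\beta)$ must be allowed to vary with the class $[\beta]$: this is what keeps $\mathcal{F}$ valued in functions of zero average and makes $L$ surjective onto the correct target.
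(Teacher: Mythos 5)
Your proposal is correct and follows essentially the same route as the paper: both arguments hinge on the invertibility of the linearised twisted scalar-curvature operator at the known $\alpha'$-twisted cscK metric, which is an elliptic, formally self-adjoint operator whose kernel is killed by the strict positivity of $\alpha'$ (the $-\int \alpha'(\nabla^{1,0}\psi,\overline{\nabla^{1,0}\psi})\,\omega^n$ term), and then invoke an inverse/implicit function theorem. The paper packages this as a two-component map $T'(\epsilon,\phi)=(\alpha'+\epsilon,\,S(\omega_\phi)-\Lambda_{\omega_\phi}(\alpha'+\epsilon))$ in Sobolev spaces and applies the quantitative inverse function theorem already set up for Theorem~\ref{mrtcsck}, whereas you treat the twist $\beta$ as the parameter and apply the implicit function theorem in H\"older spaces after subtracting the cohomological constant $c(\beta)$; these are equivalent formulations and both reuse the linearised analysis from Section~\ref{linotcscke}. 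The one point you flag — that $\mathcal{F}(\phi,\beta)$ has zero $\omega_\phi^n$-average rather than zero $\omega^n$-average — does need the small fix you anticipate (e.g.\ subtract the $\omega^n$-average instead of the cohomological constant, which does not change the linearisation at $(0,\alpha')$), but this is routine and does not affect the substance of the argument.
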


Thus, as in the \ke case, we see that proving the closedness of $I$ is the hardest part in solving Problem \ref{tcontpath}.

%The problem at $t=0$ can be easily solved; we can just take $\omega_0 := \alpha$. If we can show the \textit{openness} and \textit{closedness}, we can prove the existence of a cscK metric. The openness of the problem at $t_0 \in [0,1)$ means that for $t_0 \in [0,1)$ there exists $\epsilon_0 >0$ such that (\ref{tcsckaycp}) can be solved for $t \in (t_0 - \epsilon , t_0 + \epsilon) \cap [0,1)$ for all $0< \epsilon \le \epsilon_0$. The closedness of the problem means that, for any convergent sequence $\{ t_i \}_i$ in $[0,1]$, if (\ref{tcsckaycp}) can be solved for each $t_i$ then it can be solved for $t_{\infty} = \lim_i t_i$.

\begin{remark}
One way of obtaining a twisted cscK metric is to solve the following ``un-traced'' version of the equation (\ref{tcsck}):
\begin{equation} \label{aytcsckut}
\mathrm{Ric} (\omega_{\phi}) - R \alpha = \cst . \omega_{\phi}.
\end{equation}
This is solvable for any positive twist $\alpha$ by the Aubin--Yau theorem \cite{aub, yaucy}, when $R>0$ is chosen to be sufficiently large\footnote{See also \cite{bermantherm, songtian1, songtian2} for related results in which we do not necessarily take $R$ to be large.}. This was used by Fine \cite{finefibration} to construct cscK metrics on fibred \kah manifolds by means of adiabatic construction. However, the equation (\ref{aytcsckut}) means that the cohomology class of $\omega_{\phi}$ must be a constant multiple of $ R [ \alpha ] + c_1 (K_X)$, where $K_X$ is the canonical bundle of $X$. Theorem \ref{mrtcsck} removes this restriction on the cohomology class, assuming instead that the equation $\Lambda_{\omega} \alpha = \cst$ should be satisfied. An advantage of this can be seen when we choose $\omega = \alpha$; we can find a twisted cscK metric \textit{in the cohomology class} $[\omega] = [\alpha]$. % \cite[Theorem 4]{yaucy}
\end{remark}

\begin{remark} \label{opnt0tn0}
The openness of Problem \ref{tcontpath} at $t=0$ is different from the one at $0<t<1$ in a rather significant way due to the following fact: at $t=0$, the equation $\Lambda_{\omega_0} \alpha = \cst$ is a second order fully nonlinear PDE in the \kah potential, whereas at $0<t<1$, the equation $t S(\omega_t) - (1-t) \Lambda_{\omega_t} \alpha = \cst$ is a \textit{fourth} order fully nonlinear PDE. Thus, to prove the openness at $t=0$, we must deal with this ``jump'' in the order of the equation in question, as opposed to the case $0<t<1$.%which the author finds nontrivial.%cannot be done by e.g.~applying Banach space inverse function theorem.%; the main point of the proof of Theorem \ref{mrtcsck} is to start from a simple observation as in Proposition \ref{trivsolprop}, improve it by an iterative construction, so that we can reduce to the Banach space inverse function theorem.
\end{remark}

\begin{remark} \label{remavsclal}
Observe that the average $\bar{S}$ of the scalar curvature $S(\omega_{\phi})$ with respect to the volume form $\omega_{\phi}^n / n!$ is equal to the cohomological number $- n \int_X c_1 (K_X) \wedge [\omega^{n-1}_{\phi}] / \int_X [\omega^{n}_{\phi}]$ by the Chern--Weil theory. Since $\alpha$ is closed, the average $c$ of $\Lambda_{\omega_{\phi}} \alpha$ with respect to the volume form $\omega_{\phi}^n / n!$ is also determined by the cohomology classes of $\alpha$ and $\omega_{\phi}$ as $c= n \int_X [\alpha] \wedge [\omega^{n-1}_{\phi}] / \int_X [\omega^{n}_{\phi}]$.
\end{remark}

\subsection{Relationship to $K$-stability}

A central problem in \kah geometry in recent years has been the connection between the existence of cscK metrics and a notion of algebro-geometric stability called $K$-stability; a conjecture called \textbf{Donaldson--Tian--Yau conjecture} \cite{dontoric, tian97, yauprob} states that, when the automorphism group of $X$ is discrete\footnote{When the automorphism group of $X$ is not discrete, we consider a notion called $K$-\textit{polystability}, but we do not discuss this in detail here.}, there exists a cscK metric in the first Chern class $c_1 (L)$ of an ample line bundle $L$ if and only if the polarised \kah manifold $(X,L)$ is $K$-stable\footnote{It is expected that the notion of $K$-stability may have to be refined for this conjecture to hold, cf.~\cite{szefilt}.}.

We will be very brief in recalling the notion of $K$-stability here, and the reader is referred to \cite{dontoric, tian97} for more details. A \textbf{test configuration} for a polarised \kah manifold $(X,L)$, written $(\mathcal{X} , \mathcal{L})$, is a flat family $\pi : \mathcal{X} \to \cx$ over $\cx$ with an equivariant $\cx^*$-action lifting to the total space of a line bundle $\mathcal{L}$ such that $\pi^{-1} (1)$ is isomorphic to $(X,L^{\otimes r})$, and $r$ is called the exponent of the test configuration $(\mathcal{X} , \mathcal{L})$. We can define a rational number called the \textbf{Donaldson--Futaki invariant} $DF(\mathcal{X} , \mathcal{L})$ for each $(\mathcal{X} , \mathcal{L})$ as in \cite[\S 2.1]{dontoric}, and $(X,L)$ is said to be \textbf{$K$-stable} if $DF(\mathcal{X} , \mathcal{L}) \ge 0$ for every test configuration and $DF(\mathcal{X} , \mathcal{L}) >0$ for every ``nontrivial'' test configuration\footnote{There are subtleties associated to this formalism, e.g.~as to what ``trivial'' test configurations should mean, and the reader is referred to \cite{bhj, dertwisted, lixu, szefilt} for more details.}.

%We briefly recall that a polarised \kah manifold $(X,L)$, a pair of a \kah manifold and an ample line bundle $L$, is said to be \textbf{$K$-stable} \cite{dontoric, tian97} if for any nontrivial test configuration $(\mathcal{X} , \mathcal{L})$ of $(X,L)$, a quantity called the \textbf{Donaldson--Futaki invariant} $DF(\mathcal{X} , \mathcal{L})$ is strictly positive, where we recall that a \textbf{test configuration} $(\mathcal{X} , \mathcal{L})$ is a flat family $\pi : \mathcal{X} \to \cx$ of varieties over $\cx$ with an equivariant $\cx^*$-action lifting to the total space of the line bundle $\mathcal{L}$ such that $\pi^{-1} (1)$ is isomorphic to $(X,L)$; there are many subtleties associated to this formalism, e.g.~as to what ``trivial'' test configurations should mean, and the reader is referred to \cite{bhj, dertwisted, lixu, szefilt} for more details.

Dervan \cite{dertwisted} introduced the \textbf{minimum norm} $|| \mathcal{X}||_m$ of a test configuration $(\mathcal{X} , \mathcal{L})$ and proved that the existence of an $\alpha$-twisted cscK metric in the cohomology class $c_1 (L)$ implies the \textit{uniform twisted $K$-stability} of $(X,L, \frac{1}{2} [\alpha])$ \cite[Theorem 1.1]{dertwisted}. Note also that the minimum norm agrees, up to a constant multiple, with the non-Archimedean $J$-functional defined by Boucksom--Hisamoto--Jonsson \cite{bhj}. It turns out that when the cohomology class of $\alpha$ is a (positive) constant multiple of $c_1 (L)$, uniform twisted $K$-stability of $(X,L , \frac{1}{2} [\alpha])$ implies that there exists a constant $\tilde{R}>0$ such that $DF(\mathcal{X} , \mathcal{L}) + \tilde{R} || \mathcal{X} ||_m >0$ \cite[Remark 2.34]{dertwisted}. Recalling that it is always possible to solve $\Lambda_{\omega} \alpha = \cst$ for $\omega$ in the cohomology class $[ \alpha ] $, by taking $\omega = \alpha$, Theorem \ref{mrtcsck} implies the following result.

%The above theorem implies the following uniform lower bound of the Donaldson--Futaki invariant $DF(\mathcal{X} , \mathcal{L})$, by taking $\omega_{\phi} , \alpha \in c_1(L)$ of an ample line bundle $L$.

\begin{corollary} \label{cormrkst}
There exists a constant $\tilde{R}>0$, which depends only on $(X,L)$, such that $DF(\mathcal{X} , \mathcal{L}) > - \tilde{R} || \mathcal{X} ||_{m}$ for any test configuration $(\mathcal{X} , \mathcal{L})$ for $(X,L)$.% where $|| \mathcal{X} ||_{m}$ is the minimum norm of $\mathcal{X}$. %(or on an arbitrarily chosen \kah metric $\omega \in c_1 (L)$)
\end{corollary}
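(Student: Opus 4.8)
The plan is to combine Theorem \ref{mrtcsck} with Dervan's theorem relating twisted cscK metrics to uniform twisted $K$-stability, specialising to the case $\alpha = \omega$ (more precisely, $[\alpha] = c_1(L)$). First I would fix a \kah metric $\omega \in c_1(L)$ and simply take $\alpha := \omega$; then trivially $\Lambda_\omega \alpha = \Lambda_\omega \omega = n = \cst$, so the hypothesis of Theorem \ref{mrtcsck} is satisfied. Applying the theorem, I obtain a constant $R(\omega, \omega) > 0$ — which depends only on $(X, L)$ once $\omega$ is fixed in a canonical way, e.g.\ any background metric in $c_1(L)$ — such that for, say, $R = R(\omega,\omega)$ there is an $R\alpha$-twisted cscK metric $\omega_\phi$ in the class $[\omega] = c_1(L)$. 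Here $R\alpha = R\omega$ is a positive $(1,1)$-form whose class is $R \, c_1(L)$.

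Next I would invoke \cite[Theorem 1.1]{dertwisted}: the existence of an $(R\alpha)$-twisted cscK metric in $c_1(L)$ implies the uniform twisted $K$-stability of the triple $(X, L, \tfrac{1}{2}[R\alpha]) = (X, L, \tfrac{R}{2} c_1(L))$. Since $[R\alpha] = R\, c_1(L)$ is a positive constant multiple of $c_1(L)$, I would then apply \cite[Remark 2.34]{dertwisted}, which says that in this situation uniform twisted $K$-stability yields a constant $\tilde R > 0$ (with $\tilde R$ proportional to $R$, up to the numerical constants in Dervan's normalisation) such that
\begin{equation*}
DF(\mathcal{X}, \mathcal{L}) + \tilde R \, ||\mathcal{X}||_m > 0
\end{equation*}
for every test configuration $(\mathcal{X}, \mathcal{L})$ for $(X, L)$. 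Rearranging gives $DF(\mathcal{X}, \mathcal{L}) > - \tilde R \, ||\mathcal{X}||_m$, which is the claim. Since $R$, and hence $\tilde R$, was produced from data depending only on $(X, L)$ (the fixed background metric $\omega \in c_1(L)$ and the constant $R(\omega,\omega)$ from Theorem \ref{mrtcsck}), the constant $\tilde R$ depends only on $(X, L)$, as asserted.

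The genuinely substantive input is Theorem \ref{mrtcsck} itself, whose proof occupies the rest of the paper; once that is granted, the only point requiring care is bookkeeping the normalisation constants so that "uniform twisted $K$-stability with twist $\tfrac{R}{2}c_1(L)$" translates into the stated inequality with an explicit $\tilde R$ — this is exactly the content of \cite[Remark 2.34]{dertwisted} and is routine. I should also note that $||\mathcal{X}||_m$ being a seminorm that vanishes precisely on the "trivial" test configurations is consistent with the inequality being non-strict in the degenerate direction, but the statement as phrased is simply the rearranged strict inequality above and needs no further comment. Thus I expect no real obstacle beyond correctly citing and combining the two ingredients.
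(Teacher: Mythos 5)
Your proposal is correct and takes essentially the same route as the paper: fix $\omega = \alpha \in c_1(L)$ so that $\Lambda_\omega\alpha$ is trivially constant, apply Theorem \ref{mrtcsck} to get an $R\alpha$-twisted cscK metric in $c_1(L)$, then invoke Dervan's \cite[Theorem 1.1]{dertwisted} and \cite[Remark 2.34]{dertwisted} to convert this into the minimum-norm lower bound on $DF$. The paper presents this as a chain of implications in the paragraph preceding the corollary rather than as a formal proof, and also records (in the following remark) that Proposition \ref{trivsolprop} alone already suffices, since the twist $\alpha'$ it produces still has cohomology class $c_1(L)$ — a small simplification you could have used, but not one that changes the substance.
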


\begin{remark}
In fact this lower bound itself can be obtained from Proposition \ref{trivsolprop}, which is the starting point of our proof of Theorem \ref{mrtcsck}.%a straightforward observation as stated in Proposition \ref{trivsolprop}.
\end{remark}

\begin{remark}
We now recall that there is another norm $|| \mathcal{X} ||_2$ for a test configuration $(\mathcal{X} , \mathcal{L})$, called the \textbf{$L^2$-norm} of $(\mathcal{X} , \mathcal{L})$. It is known \cite{bhj, dertwisted} that $|| \mathcal{X} ||_2 =0$ if and only if $|| \mathcal{X} ||_m =0$, but they are \textit{not} Lipschitz equivalent \cite{bhj}. Recall that the $L^2$-norm appears in the lower bound of the Calabi functional $Ca (\omega) := \int_X (S (\omega) - \bar{S})^2 \frac{\omega^n}{n!}$, which takes the form
\begin{equation} \label{lbcalfcn}
 \inf_{\omega \in c_1(L)} \int_X (S (\omega) - \bar{S})^2 \frac{\omega^n}{4 \pi n!} \ge \sup_{(\mathcal{X}, \mathcal{L})} \left(  -   \frac{DF(\mathcal{X} , \mathcal{L})}{||\mathcal{X}||_2} \right)
\end{equation}
as established by Donaldson \cite{donlb}. It immediately follows that we have $DF(\mathcal{X} , \mathcal{L}) \ge - \tilde{R} || \mathcal{X} ||_{2}$ by taking $\tilde{R}$ to be the left hand side of the inequality (\ref{lbcalfcn}). Thus, Corollary \ref{cormrkst} can be seen as a \textit{minimum-norm version} of this particular consequence of the lower bound of the Calabi functional.
\end{remark}

\subsection{Some open problems}

In view of Problem \ref{tcontpath}, it is natural to define the following quantity
\begin{equation*}
\tilde{R}_{\alpha} := \inf \{ R \ge 0 \mid \exists \phi  \in C^{\infty} (X , \rl) \text{ s.t. } S(\omega_{\phi}) -  \Lambda_{\omega_{\phi}} (R \alpha) = \cst , \ \omega \in c_1 (L) \} ,
\end{equation*}
as introduced by X.X.~Chen \cite[Definition 1.6]{chentwisted}, which is analogous to the $R(X)$ invariant defined by Sz\'ekelyhidi \cite{szerx} for Fano manifolds. Given Corollary \ref{cormrkst}, it seems natural to consider the following question.

\begin{question} \label{tcpekutc}
Does there exist a test configuration $(\tilde{\mathcal{X}} , \tilde{\mathcal{L}})$ with $|| \tilde{\mathcal{X}} ||_m >0$ such that $DF(\tilde{\mathcal{X}} , \tilde{\mathcal{L}}) = - \tilde{R}_{\alpha} || \tilde{\mathcal{X}} ||_{m}$?
\end{question}

Since the Donaldson--Futaki invariant and the minimum norm can be defined in terms of algebro-geometric data, we can ask if $\tilde{R}_{\alpha}$ in the above can be written without referring to the particular choice of twist $\alpha$, and potentially in terms of algebro-geometric language. We thus ask the following question, as conjectured by X.X.~Chen.

\begin{question} {(X.X.~Chen \cite[Conjecture 1.17]{chentwisted})} \label{qindtwiscc}
For any two closed positive real $(1,1)$-forms $\alpha$ and $\beta$ in the same cohomology class, do we have $\tilde{R}_{\alpha} = \tilde{R}_{\beta}$?
\end{question}

If we can solve Question \ref{tcpekutc} in the affirmative, we see that $\tilde{R}_{\alpha} >0$ would imply that $(X,L)$ cannot be $K$-stable, since $(\tilde{\mathcal{X}} , \tilde{\mathcal{L}})$ would provide a test configuration whose Donaldson--Futaki invariant is strictly negative. Noting that there cannot be a cscK metric in $c_1(L)$ if $\tilde{R}_{\alpha} >0$, this would provide an evidence in support of the Donaldson--Tian--Yau conjecture.

%On the other hand, the solution of Question \ref{tcpekutc} could be out of reach at the moment. However, the author believes that the following problem is more approachable.

%The affirmative solution of Question \ref{qindtwiscc} would allow us to express the quantities in the equation $DF(\tilde{\mathcal{X}} , \tilde{\mathcal{L}}) = - \tilde{R}_{\alpha} || \tilde{\mathcal{X}} ||_{m}$ without referring to the particular choice of twist $\alpha$, and potentially in terms of purely algebro-geometric language. The affirmative solution of Question \ref{tcpekutc} is closely related to the resolution of the Donaldson--Tian--Yau conjecture; $\tilde{R}_{\alpha} >0$ implies that $(X,L)$ cannot admit a cscK metric, and the above $(\tilde{\mathcal{X}} , \tilde{\mathcal{L}})$ provides a $K$-unstable test configuration. Although solving the above question may be out of reach at the moment, I believe that the following problem is more approachable.
%\begin{problem}
%When $X = \mathrm{Bl}_{\mathrm{pt}} \prj^2$ (the projective plane blown up at a point), which does not admit a cscK metric in any \kah class, compute explicitly $\tilde{R}_{\alpha}$ for a given twist $\alpha$, for each polarisation.
%\end{problem}

%A progress on this problem will appear in the subsequent work of the author.

\subsection*{Acknowledgements}

The author is very grateful to Xiuxiong Chen for kindly acknowledging this work in \cite{chentwisted} although at that time this work was only in a draft state, and also for encouragements. The author thanks Ruadha\'i Dervan for many helpful discussions on twisted cscK metrics and helpful comments. Last but not least, thanks are due to the author's supervisor Jason Lotay for helpful comments.

%, and particularly for pointing out a connection to the modified Futaki invariant in the semipositive case.

\section{Linearisation of the twisted cscK equation} \label{linotcscke}

%We use the openness above to show that we can find $\phi$ such that $\omega_{\phi}$ is $\alpha$-twisted cscK.

%It suffices to show that the first eigenvalue of $- \mathfrak{D}_{\omega}^* \mathfrak{D}_{\omega} \phi + R F_{\alpha} (\phi)$ is bounded away from zero, which implies that the operator norm of the inverse is bounded. This required bound can be achieved by taking $R$ to be very large and recalling the Poincar\'e inequality ($- \mathfrak{D}_{\omega}^* \mathfrak{D}_{\omega} \phi + R F_{\alpha} (\phi)$ is self-adjoint and elliptic, so we can take the complete orthonormal basis in $L^2$ consisting of eigenfunctions). We can then use the quantitative version of Banach space inverse function theorem, as in Theorem 5.3 of Fine's thesis.

Suppose that we write $\omega_t := \omega + t \ai \ddbar \phi$. Recall that the scalar curvature $S(\omega_t)$ of $\omega_t$ is defined by $S(\omega_t) := \Lambda_{\omega_t} \mathrm{Ric} (\omega_t)$, where $\mathrm{Ric} (\omega_t) : = - \ai \ddbar \log \det (\omega_t)$ is the Ricci form of $\omega_t$. Locally, writing $\omega_t = \sum_{i,j} g_{i \bar{j}, t}  \ai d z_i \wedge d \bar{z}_j$ in local holomorphic coordinates $(z_1 , \dots , z_n)$, we have
\begin{equation*}
S(\omega_t) = - \sum_{i,j} g^{i \bar{j} }_t  \frac{\partial^2}{\partial z_i \partial \bar{z}_j} \log \det (g_{k \bar{l} , t})
\end{equation*}
where $g^{i \bar{j} }_t$ is the inverse matrix of $g_{i \bar{j}, t} $. We find by direct computation that
\begin{equation*}
\left. \frac{d}{dt} \right|_{t=0} S(\omega_t) = -\Delta^2_{\omega} \phi - (\mathrm{Ric} (\omega) , \ai \ddbar \phi)_{\omega} %$\Delta_{\omega}:= - \bar{\partial} \bar{\partial}^* -  \bar{\partial}^* \bar{\partial}$.
\end{equation*}
where $\Delta_{\omega}$ is the $\bar{\partial}$-Laplacian and $(,)_{\omega}$ is a (pointwise) inner product on the space of 2-forms defined by $\omega$. It is well-known (cf.~\cite{lebsim}) that it can also be written as
\begin{equation*}
\left. \frac{d}{dt} \right|_{t=0} S(\omega_t) =- \mathfrak{D}_{\omega}^* \mathfrak{D}_{\omega} \phi + (\partial S(\omega) , \bar{\partial} \phi)_{\omega}
\end{equation*}
where $\mathfrak{D}_{\omega}: C^{\infty} (X , \rl) \to C^{\infty} (T^{1,0} X \otimes \Omega^{0,1}(X))$ is an operator defined by $\mathfrak{D}_{\omega} \phi := \bar{\partial} (\mathrm{grad}^{1,0}_{\omega} \phi)$, and $\mathfrak{D}^*_{\omega}$ is the formal adjoint with respect to $\omega$. Observe that the kernel of $\mathfrak{D}_{\omega}^* \mathfrak{D}_{\omega}$, which is equal to the kernel of $ \mathfrak{D}_{\omega}$ as $X$ is compact without boundary, is equal to the set of functions whose $(1,0)$-part of the gradient is a holomorphic vector field.

Now, let $\alpha$ be a closed positive $(1,1)$-form. Straightforward computation yields
\begin{equation*}
\left. \frac{d}{dt} \right|_{t=0} \Lambda_{\omega_t} \alpha = - (\alpha , \ai \ddbar \phi)_{\omega} .
\end{equation*}
We thus get
\begin{align*}
&\left. \frac{d}{dt} \right|_{t=0} (S(\omega_t) - \Lambda_{\omega_t} \alpha) \\
&= - \mathfrak{D}_{\omega}^* \mathfrak{D}_{\omega} \phi + (\partial S(\omega) , \bar{\partial} \phi)_{\omega} - (\partial (\Lambda_{\omega} \alpha) , \bar{\partial} \phi)_{\omega} + (\alpha , \ai \ddbar \phi)_{\omega}  + (\partial (\Lambda_{\omega} \alpha) , \bar{\partial} \phi)_{\omega} .
\end{align*}
%\begin{align*}
%\left. \frac{d}{dt} \right|_{t=0} (S(\omega_t) - \Lambda_{\omega_t} \alpha) &= - \Delta^2_{\omega} \phi - \mathrm{Ric}(\omega)^{k \bar{l}} \frac{\partial^2 \phi}{\partial z_k \partial \bar{z}_l} + g^{i \bar{l}} g^{m \bar{j}} \alpha_{i \bar{j}} \frac{\partial^2 \phi}{\partial z_m \partial \bar{z}_l} \\
%&= - \mathfrak{D}_{\omega}^* \mathfrak{D}_{\omega} \phi + (\partial S(\omega) , \bar{\partial} \phi)_{\omega} + (\alpha , \ai \ddbar \phi)_{\omega} - (\partial (\Lambda_{\omega} \alpha) , \bar{\partial} \phi)_{\omega} + (\partial (\Lambda_{\omega} \alpha) , \bar{\partial} \phi)_{\omega}
%\end{align*}
Note that, if $\omega$ is an $\alpha$-twisted cscK metric, i.e.~satisfies $S(\omega) - \Lambda_{\omega} \alpha = \cst$, we thus have
\begin{equation} \label{linatcsckeq}
\left. \frac{d}{dt} \right|_{t=0} (S(\omega_t) - \Lambda_{\omega_t} \alpha) = - \mathfrak{D}_{\omega}^* \mathfrak{D}_{\omega} \phi +  (\alpha , \ai \ddbar \phi)_{\omega} + (\partial (\Lambda_{\omega} \alpha) , \bar{\partial} \phi)_{\omega} ,
\end{equation}
and hence it seems natural to make the following definition.
\begin{definition}
Given two \kah metrics $\omega$ and $\alpha$, we define an operator $F_{\omega, \alpha}: C^{\infty} (X , \rl) \to C^{\infty} (X , \rl)$ by
\begin{equation*}
F_{\omega , \alpha} (\phi) :=  (\alpha , \ai \ddbar \phi)_{\omega} + (\partial (\Lambda_{\omega} \alpha) , \bar{\partial} \phi)_{\omega} .
\end{equation*}

\end{definition}

\begin{lemma} \label{lemlinojfcsasorlok}
$F_{\omega, \alpha}$ is a complex self-adjoint second order elliptic linear operator which satisfies
\begin{equation*}
\int_X \psi F_{\omega, \alpha} (\phi) \frac{\omega^n}{n!} = - \int (\overline{\xi_{\psi}} , \xi_{\phi} )_{\alpha}  \frac{\omega^n}{n!} ,
\end{equation*}
where $\xi_{\phi}:= ({\partial } \phi)^{\sharp, \omega}$ (resp.~$\xi_{\psi}:= ({\partial} \psi)^{\sharp, \omega}$) is the $\omega$-metric dual of ${\partial} \phi$ (resp.~${\partial} \psi$). In particular, $\ker F_{\omega, \alpha}$ is the set of constant functions.
\end{lemma}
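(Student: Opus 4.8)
\emph{Proof plan.} The ellipticity is immediate from the symbol: in local holomorphic coordinates the second order part of $F_{\omega , \alpha}$ is $\phi \mapsto (\alpha , \ai \ddbar \phi)_{\omega} = \sum g^{i \bar{j}} g^{k \bar{l}} \alpha_{i \bar{l}} \phi_{k \bar{j}}$, whose principal symbol at a nonzero real covector $\xi$ is the sign-definite Hermitian form $\xi \mapsto - \sum g^{i \bar{j}} g^{k \bar{l}} \alpha_{i \bar{l}} \xi_k \bar{\xi}_j$, definite because $\omega$ and $\alpha$ are both \kah metrics. Moreover $F_{\omega , \alpha}$ has real coefficients, the first order term $(\partial (\Lambda_{\omega} \alpha) , \bar{\partial} \phi)_{\omega}$ being understood as the real gradient pairing exactly as for the term $(\partial S(\omega) , \bar{\partial} \phi)_{\omega}$ in the linearisation of $S(\omega_t)$ recalled in Section~\ref{linotcscke}, so that both the (complex) self-adjointness and the description of the kernel will follow formally from the displayed integration-by-parts identity: once that identity is known, its right-hand side is symmetric in $\phi$ and $\psi$, giving self-adjointness, and setting $\psi = \phi$ yields $\int_X \phi \, F_{\omega , \alpha}(\phi) \, \frac{\omega^n}{n!} = - \int_X | \xi_{\phi} |^2_{\alpha} \, \frac{\omega^n}{n!} \le 0$, which vanishes exactly when $\xi_{\phi} = (\partial \phi)^{\sharp , \omega} \equiv 0$, i.e.~when $\phi$ is constant ($X$ being connected); hence $\ker F_{\omega , \alpha}$ consists of the constants. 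The whole content of the lemma is therefore the integration-by-parts identity.

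To prove it, the plan is to rewrite the two terms of $F_{\omega , \alpha}(\phi)$ as wedge products with powers of $\omega$, integrate by parts twice using $d \alpha = d \omega = 0$, and re-expand with two standard pointwise identities of \kah linear algebra: the Weil relation $\alpha \wedge \beta \wedge \frac{\omega^{n-2}}{(n-2)!} = \big( (\Lambda_{\omega} \alpha)(\Lambda_{\omega} \beta) - (\alpha , \beta)_{\omega} \big) \frac{\omega^n}{n!}$ (for any real $(1,1)$-form $\beta$), and, for functions $f, g$, $\ai \partial f \wedge \bar{\partial} g \wedge \frac{\omega^{n-1}}{(n-1)!} = (\partial f , \bar{\partial} g)_{\omega} \frac{\omega^n}{n!}$ together with $\ai \partial f \wedge \bar{\partial} g \wedge \alpha \wedge \frac{\omega^{n-2}}{(n-2)!} = \big( (\partial f , \bar{\partial} g)_{\omega} \Lambda_{\omega} \alpha - (\overline{\xi_g} , \xi_f)_{\alpha} \big) \frac{\omega^n}{n!}$. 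Applying the Weil relation with $\beta = \ai \ddbar \phi$ and using $\Lambda_{\omega}(\ai \ddbar \phi) \frac{\omega^n}{n!} = \ai \ddbar \phi \wedge \frac{\omega^{n-1}}{(n-1)!}$ splits $\int_X \psi \, (\alpha , \ai \ddbar \phi)_{\omega} \frac{\omega^n}{n!}$ into $\int_X \psi (\Lambda_{\omega} \alpha) \, \ai \ddbar \phi \wedge \frac{\omega^{n-1}}{(n-1)!}$ minus $\int_X \psi \, \alpha \wedge \ai \ddbar \phi \wedge \frac{\omega^{n-2}}{(n-2)!}$. In the first piece I would integrate by parts (valid since $\omega^{n-1}$ is closed), moving a $\partial$ onto $\psi \Lambda_{\omega} \alpha$: the part where $\partial$ hits $\Lambda_{\omega} \alpha$ gives $- \int_X \psi \, (\partial (\Lambda_{\omega} \alpha) , \bar{\partial} \phi)_{\omega} \frac{\omega^n}{n!}$, which is exactly cancelled by the first order term of $F_{\omega , \alpha}$, while the part where $\partial$ hits $\psi$ gives $- \int_X (\Lambda_{\omega} \alpha)(\partial \psi , \bar{\partial} \phi)_{\omega} \frac{\omega^n}{n!}$. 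In the second piece I would integrate by parts once more (now $\alpha \wedge \omega^{n-2}$ is the closed form present) and apply the last pointwise identity, which reproduces $+ \int_X (\Lambda_{\omega} \alpha)(\partial \psi , \bar{\partial} \phi)_{\omega} \frac{\omega^n}{n!}$ together with $- \int_X (\overline{\xi_{\psi}} , \xi_{\phi})_{\alpha} \frac{\omega^n}{n!}$. The two $\int_X (\Lambda_{\omega} \alpha)(\partial \psi , \bar{\partial} \phi)_{\omega} \frac{\omega^n}{n!}$ contributions cancel, and what remains is precisely $\int_X \psi \, F_{\omega , \alpha}(\phi) \frac{\omega^n}{n!} = - \int_X (\overline{\xi_{\psi}} , \xi_{\phi})_{\alpha} \frac{\omega^n}{n!}$.

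The difficulty is entirely bookkeeping: keeping the signs and combinatorial factors straight in the two pointwise identities and the two integrations by parts, and checking the two clean cancellations --- the first order term of $F_{\omega , \alpha}$ against the derivative falling on $\Lambda_{\omega} \alpha$, and the two $(\Lambda_{\omega} \alpha)(\partial \psi , \bar{\partial} \phi)_{\omega}$ terms against one another. One should also note that in each integration by parts the would-be leftover of type $(0,2)$, namely $\bar{\partial} \psi \wedge \bar{\partial} \phi$ wedged with a form of type $(n-1 , n-1)$, vanishes for degree reasons. A more structural alternative would be to recognise $F_{\omega , \alpha}$, up to sign, as the second variation of a primitive of $\phi \mapsto \Lambda_{\omega_{\phi}} \alpha$ --- the functional underlying the $J$-equation --- but carrying this out rigorously costs essentially the same computation.
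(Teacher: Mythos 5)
Your plan is correct and coincides essentially step for step with the paper's proof: both split $(\alpha, \ai\ddbar\phi)_\omega$ via the Weil relation from \cite[Lemma 4.7]{szebook} into a $\Lambda_\omega\alpha\,\Delta_\omega\phi$ term and a wedge term, integrate each by parts using closedness of $\alpha$ and $\omega$, observe the cancellation of the derivative falling on $\Lambda_\omega\alpha$ against the first-order term of $F_{\omega,\alpha}$ and of the two $(\Lambda_\omega\alpha)(\partial\psi,\bar\partial\phi)_\omega$ contributions, and read off self-adjointness and the kernel from the resulting identity $\int_X\psi F_{\omega,\alpha}(\phi)\,\omega^n/n! = -\int(\overline{\xi_\psi},\xi_\phi)_\alpha\,\omega^n/n!$. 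Your extra remark on the principal symbol only makes explicit what the paper dismisses as immediate from positivity of $\alpha$.
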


\begin{proof}
 It is immediate that $F_{\omega , \alpha}$ is a second order elliptic linear operator, since $\alpha$ is strictly positive. By recalling some well-known identities (see e.g.~\cite[Lemma 4.7]{szebook}), we compute
\begin{align*}
&\int_X \psi F_{\omega , \alpha} (\phi) \frac{\omega^n}{n!}  \\
%&= \int_X \left( \psi (\alpha , \ai \ddbar \phi)_{\omega} + \psi (\partial (\Lambda_{\omega} \alpha) , \bar{\partial} \phi)_{\omega} \right)\frac{\omega^n}{n!} \\
&= \int_X \psi (\Lambda_{\omega} \alpha \Delta_{\omega} \phi) \frac{\omega^n}{n!} - \int_X \psi \alpha \wedge \ai \ddbar \phi \wedge \frac{\omega^{n-2}}{(n-2)!} + \int_X \psi \ai \partial (\Lambda_{\omega} \alpha) \wedge \bar{\partial } \phi \wedge \frac{\omega^{n-1}}{(n-1)!} .
%&=\int_X \phi \Delta_{\omega} (\psi (\Lambda_{\omega} \alpha))   \frac{\omega^n}{n!} + \int_X \ai \partial \psi \wedge \alpha \wedge \bar{\partial} \phi \wedge \frac{\omega^{n-2}}{(n-2)!} \\
%&\ \ \ \ \ \ +  \int_X \ai \bar{\partial} \psi \wedge \partial (\Lambda_{\omega} \alpha) \wedge \phi \frac{\omega^{n-1}}{(n-1)!} - \int_X \psi \ai \ddbar (\Lambda_{\omega} \alpha) \wedge  \phi \frac{\omega^{n-1}}{(n-1)!} 
\end{align*}

Note that, integrating by parts, we have
\begin{equation*}
\int_X \psi (\Lambda_{\omega} \alpha \Delta_{\omega} \phi) \frac{\omega^n}{n!} = - \int_X ( \Lambda_{\omega} \alpha) \ai \partial \psi \wedge \bar{\partial} \phi \wedge \frac{\omega^{n-1}}{(n-1)!} - \int_X  \psi \ai \partial (\Lambda_{\omega} \alpha) \wedge \bar{\partial} \phi \wedge \frac{\omega^{n-1}}{(n-1)!}  ,
\end{equation*}
and
\begin{equation*}
- \int_X \psi \alpha \wedge \ai \ddbar \phi \wedge \frac{\omega^{n-2}}{(n-2)!} = \int_X \ai \partial \psi \wedge \alpha \wedge \bar{\partial} \phi \wedge \frac{\omega^{n-2}}{(n-2)!} ,\end{equation*}
since $\alpha$ is closed. Thus
\begin{align*}
&\int_X \psi F_{\omega , \alpha} (\phi) \frac{\omega^n}{n!}  \\
&=  - \int_X ( \Lambda_{\omega} \alpha) \ai \partial \psi \wedge \bar{\partial} \phi \wedge \frac{\omega^{n-1}}{(n-1)!} +  \int_X \ai \partial \psi \wedge \alpha \wedge \bar{\partial} \phi \wedge \frac{\omega^{n-2}}{(n-2)!}  \\
&=  - \int_X ( \Lambda_{\omega} \alpha) \Lambda_{\omega} (\ai \partial \psi \wedge \bar{\partial} \phi ) \frac{\omega^{n}}{n!} + \int_X \ai \partial \psi  \wedge \bar{\partial} \phi \wedge \alpha \wedge \frac{\omega^{n-2}}{(n-2)!}  \\
&= -\int (\alpha, \ai \partial \psi \wedge \bar{\partial} \phi )_{\omega} \frac{\omega^n}{n!} \\
&= - \int (\overline{\xi_{\psi}} , \xi_{\phi} )_{\alpha}  \frac{\omega^n}{n!} 
\end{align*}
where we wrote $\xi_{\phi}:= ({\partial } \phi)^{\sharp, \omega}$ (resp.~$\xi_{\psi}:= ({\partial} \psi)^{\sharp, \omega}$) for the $\omega$-metric dual of ${\partial} \phi$ (resp.~${\partial} \psi$).

We thus get
\begin{equation*}
\int_X \psi F_{\omega , \alpha} (\phi) \frac{\omega^n}{n!} = \int_X \overline{\phi F_{\omega , \alpha} (\psi)} \frac{\omega^n}{n!} 
\end{equation*}
and hence $F_{\omega , \alpha}$ is (complex) self-adjoint. We also see that 
\begin{equation*}
\int_X \phi F_{\omega , \alpha} (\phi) \frac{\omega^n}{n!} =  - \int_X || \xi_{\phi} ||^2_{\alpha}  \frac{\omega^n}{n!} <0
\end{equation*}
for every non-constant function $\phi$, since $\alpha$ is positive definite. Thus $F_{\omega , \alpha} (\phi) =0$ if and only if $\phi$ is constant.

\end{proof}

\section{Proof of Theorem \ref{mrtcsck}}

We follow the line of argument similar to the one in the paper of Fine \cite{finesurfaces} or LeBrun--Simanca \cite{lebsim}; we construct approximate solutions to the $R \alpha$-twisted cscK equation (\S \ref{trivsolprop}), and then apply the Banach space inverse function theorem to get the genuine solution (\S \ref{appinvfnth}).

\subsection{Construction of approximate solutions}

We start with the following observation.
\begin{proposition} \label{trivsolprop}
Let $\omega$ and $\alpha$ be \kah metrics satisfying $\Lambda_{\omega} \alpha = \cst$, and let $G(\omega)$ be the solution to $\Delta_{\omega} G(\omega) = S(\omega) - \bar{S}$. Then $\omega$ is $R \alpha'$-twisted cscK if we define $\alpha' := \alpha + \ai \ddbar G(\omega) / R$, which is strictly positive if $R >0$ is chosen to be sufficiently large. 
\end{proposition}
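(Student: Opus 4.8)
The plan is to verify directly that $\omega$ satisfies the $R\alpha'$-twisted cscK equation $S(\omega) - \Lambda_\omega(R\alpha') = \cst$ with the stated choice of $\alpha'$, and then separately check positivity of $\alpha'$ for large $R$. The first point is essentially an algebraic manipulation: with $\alpha' = \alpha + \ai\ddbar G(\omega)/R$ we have
\begin{equation*}
S(\omega) - \Lambda_\omega(R\alpha') = S(\omega) - R\Lambda_\omega\alpha - \Lambda_\omega(\ai\ddbar G(\omega)).
\end{equation*}
Now $\Lambda_\omega(\ai\ddbar G(\omega)) = \Delta_\omega G(\omega) = S(\omega) - \bar{S}$ by the defining property of $G(\omega)$ (here $\Delta_\omega$ is the $\bar\partial$-Laplacian, for which $\Delta_\omega u = \Lambda_\omega(\ai\ddbar u)$ up to the sign convention fixed earlier in the paper), so the right-hand side collapses to $\bar{S} - R\Lambda_\omega\alpha$. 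Since $\Lambda_\omega\alpha = \cst$ by hypothesis and $\bar{S}$ is a constant, this is indeed a constant, which proves $\omega$ is $R\alpha'$-twisted cscK. One should also note that $G(\omega)$ exists and is unique up to an additive constant because $S(\omega) - \bar{S}$ has zero average against $\omega^n/n!$ (that is the definition of $\bar{S}$), so it lies in the image of the self-adjoint operator $\Delta_\omega$; the additive ambiguity does not affect $\ai\ddbar G(\omega)$.

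The remaining point is that $\alpha' = \alpha + \ai\ddbar G(\omega)/R$ is a \kah metric (strictly positive $(1,1)$-form) once $R$ is large. This is where the quantitative content lies, though it is elementary: $\alpha$ is a fixed \kah metric, hence $\alpha \ge c\,\omega$ pointwise for some constant $c>0$ depending on $\omega$ and $\alpha$; meanwhile $G(\omega)$ is a fixed smooth function, so $\ai\ddbar G(\omega)$ is a fixed smooth $(1,1)$-form, bounded below by $-C\omega$ for some constant $C = C(\omega,\alpha) > 0$. Therefore
\begin{equation*}
\alpha' \ge \left(c - \frac{C}{R}\right)\omega,
\end{equation*}
which is strictly positive as soon as $R > C/c =: R(\omega,\alpha)$. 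Note that $\alpha'$ is automatically closed since both $\alpha$ and $\ai\ddbar G(\omega)$ are closed, and it lies in the cohomology class $[\alpha]$.

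I do not expect any serious obstacle here: the proposition is a clean observation rather than a theorem with a hard core, and indeed the text flags it as merely "the starting point" of the proof of Theorem \ref{mrtcsck}. The only mild care needed is bookkeeping of sign and normalisation conventions for $\Delta_\omega$ versus $\Lambda_\omega\ai\ddbar$, and recording explicitly that the threshold $R(\omega,\alpha)$ depends only on the pointwise lower bound of $\alpha$ relative to $\omega$ and on a $C^0$-type bound on the Hessian of $G(\omega)$ — both of which are determined by $\omega$ and $\alpha$ alone. The genuine difficulty of Theorem \ref{mrtcsck} is deferred to the later perturbation step (\S\ref{appinvfnth}), where one must upgrade this exactly-solved twisted equation for the auxiliary twist $R\alpha'$ into a solution of the equation with the original twist $R\alpha$, via the Banach space inverse function theorem and the invertibility of $F_{\omega,\alpha}$ established in Lemma \ref{lemlinojfcsasorlok}.
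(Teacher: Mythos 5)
Your proof is correct and is exactly the paper's reasoning: the paper gives no separate proof, treating the proposition as an immediate observation, namely that $\Lambda_\omega(\ai\ddbar G(\omega)) = \Delta_\omega G(\omega) = S(\omega)-\bar S$ cancels the non-constant part of $S(\omega)-R\Lambda_\omega\alpha$, while $\alpha' = \alpha + \ai\ddbar G(\omega)/R$ is positive for large $R$ since $\ai\ddbar G(\omega)$ is a fixed smooth form. Nothing to add.
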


Thus, almost by tautology, we get an $R \alpha'$-twisted cscK metric for $\alpha' \in [\alpha]$ that is different from $\alpha$ by order $1/R$. Our aim in what follows is to ``improve'' this observation ``order by order'', so that we get an $R \alpha_m$-twisted cscK metric for $\alpha_m \in [ \alpha ]$ that is different from $\alpha$ by order $1/R^m$, say.

Suppose $\Lambda_{\omega} \alpha = \cst$. Then we have the trivial
\begin{equation*}
S(\omega) - R \Lambda_{\omega} \alpha = \cst + (S(\omega) - \bar{S}) .
\end{equation*}

Now consider $\omega_1 : = \omega + \ai \ddbar \phi_1 / R$. Then, expanding in $1/R$, we get
\begin{equation*}
S(\omega_1) - R \Lambda_{\omega_1} \alpha = \cst + (S(\omega) - \bar{S})  + (\alpha , \ai \ddbar \phi_1)_{\omega} + O(1/R).
\end{equation*}
%\begin{align*}
%S(\omega_1) - R \Lambda_{\omega_1} \alpha &= S(\omega) - R \Lambda_{\omega} \alpha + \frac{1}{R} \mathbb{L}_{\omega} \phi_1 + (\alpha , \ai \ddbar \phi_1)_{\omega} + O(1/R) \\
%&= \cst + (S(\omega) - \bar{S})  + (\alpha , \ai \ddbar \phi_1)_{\omega} + O(1/R)
%\end{align*}
We now wish to choose $\phi_1 \in C^{\infty} (X , \rl)$ so that $(S(\omega) - \bar{S})  + (\alpha , \ai \ddbar \phi_1)_{\omega}$ becomes constant. This can be achieved by the following lemma.

\begin{lemma} \label{lemsolvoboR}
Suppose $\Lambda_{\omega} \alpha = \cst$ and that the average of $f \in C^{\infty} (X , \rl)$ over $X$ with respect to $\omega$ is $0$. Then there exists $\phi \in C^{\infty} (X , \rl)$ such that $(\alpha , \ai \ddbar \phi)_{\omega} = f$.
\end{lemma}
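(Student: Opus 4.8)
The plan is to recognise the equation $(\alpha, \ai\ddbar\phi)_{\omega} = f$ as the equation $F_{\omega,\alpha}(\phi) = f$ and then invoke standard elliptic Fredholm theory. Indeed, since $\Lambda_{\omega}\alpha = \cst$ we have $\partial(\Lambda_{\omega}\alpha) = 0$, so the defining formula for $F_{\omega,\alpha}$ collapses to $F_{\omega,\alpha}(\phi) = (\alpha, \ai\ddbar\phi)_{\omega}$. Thus the assertion of the lemma is precisely that $f$ lies in the image of the operator $F_{\omega,\alpha} : C^{\infty}(X,\rl) \to C^{\infty}(X,\rl)$.

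By Lemma \ref{lemlinojfcsasorlok}, $F_{\omega,\alpha}$ is a second order elliptic linear operator which is self-adjoint with respect to the $L^2$-pairing $(\phi,\psi) \mapsto \int_X \phi\psi\, \omega^n/n!$, and whose kernel is exactly the constant functions. Since $X$ is compact without boundary, $F_{\omega,\alpha}$ is Fredholm between the appropriate Sobolev completions, and self-adjointness gives the orthogonal decomposition into kernel and image; in particular the cokernel is identified with $\ker F_{\omega,\alpha} = \rl$, so $f$ is solvable if and only if $f$ is $L^2$-orthogonal to the constants, i.e.\ if and only if $\int_X f\, \omega^n/n! = 0$. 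This is exactly the hypothesis that the average of $f$ over $X$ with respect to $\omega$ vanishes. Finally, since $f$ is smooth, elliptic regularity upgrades the resulting weak solution to $\phi \in C^{\infty}(X,\rl)$.

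I do not expect a genuine obstacle here: the only non-formal ingredients, namely the self-adjointness of $F_{\omega,\alpha}$ and the identification of its kernel with the constants, are already supplied by Lemma \ref{lemlinojfcsasorlok}, and what remains is a routine invocation of the Fredholm alternative together with elliptic regularity. The one point worth stating carefully is simply that ``average of $f$ with respect to $\omega$'' matches $L^2$-orthogonality to the constants for the pairing weighted by the volume form $\omega^n/n!$, so that the solvability condition from Fredholm theory coincides with the hypothesis as stated.
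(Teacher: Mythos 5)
Your proposal is correct and follows essentially the same route as the paper: reduce to $F_{\omega,\alpha}$ using $\partial(\Lambda_\omega\alpha)=0$, invoke Lemma \ref{lemlinojfcsasorlok} for ellipticity, self-adjointness and the kernel, apply the Fredholm alternative on Sobolev completions, and finish with elliptic regularity. The only cosmetic difference is that the paper restricts to the zero-average subspace $C^\infty(X,\rl)_0$ (where the kernel is trivial) while you work on the full space and identify the cokernel with the constants; these are equivalent formulations of the same argument.
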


\begin{proof}
First of all, $\Lambda_{\omega} \alpha = \cst$ implies $(\alpha , \ai \ddbar \phi)_{\omega} = F_{\omega , \alpha} (\phi)$. Note also that (cf.~\cite[Lemma 4.7]{szebook})
\begin{align*}
\int_X F_{\omega ,\alpha} (\phi) \frac{\omega^n}{n!} = \int_X (\alpha , \ai \ddbar \phi)_{\omega} \frac{\omega^n}{n!} &= \int_X \Lambda_{\omega} \alpha \Delta_{\omega} \phi \frac{\omega^n}{n!} - \int_X \alpha \wedge \ai \ddbar \phi \wedge \frac{\omega^{n-2}}{(n-2)!} \\
&=0
\end{align*}
since $\Lambda_{\omega} \alpha = \cst$ and $\alpha$ is closed. This means that in order for the equation $(\alpha , \ai \ddbar \phi)_{\omega} = f$ to hold, it is necessary that the average of $f$ is zero.

Suppose that we write $C^{\infty} (X , \rl)_0$ for the set of smooth functions whose average (with respect to $\omega$) is zero. We get the claimed result if the operator $F_{\omega , \alpha} :  C^{\infty} (X , \rl)_0 \to C^{\infty} (X , \rl)_0$ is surjective, assuming $\Lambda_{\omega} \alpha = \cst$. We pass to the Sobolev completion $L^2_p$ of $C^{\infty} (X , \rl)_0$. Since the operator $F_{\omega , \alpha} : L^2_p \to L^2_{p-2}$ is elliptic and $X$ is compact without boundary, it is Fredholm. Lemma \ref{lemlinojfcsasorlok} shows that $F_{\omega , \alpha}$ is self-adjoint and that the kernel of $F_{\omega , \alpha}$ is trivial, and hence by the Fredholm alternative we conclude that $F_{\omega , \alpha} : L^2_p \to L^2_{p-2}$ is surjective.

% Lemma \ref{lemlinojfcsasorlok} shows that the operator $F_{\omega , \alpha} : L^2_p \to L^2_{p-2}$ is elliptic and self-adjoint, and hence it is Fredholm. Lemma \ref{lemlinojfcsasorlok} also shows that the kernel of $F_{\omega , \alpha}$ must be trivial, and hence by the Fredholm alternative we conclude that $F_{\omega , \alpha} : L^2_p \to L^2_{p-2}$ is surjective.

In other words, for every $f \in L^2_{p-2}$ there exists $\phi \in L^2_p$ such that $F_{\omega , \alpha} (\phi) =f$. However, since $F_{\omega , \alpha}$ is elliptic, $\phi$ must be smooth if $f$ is smooth by the elliptic regularity. This establishes the claim stated in the lemma.

%Thus, Lemma \ref{lemlinojfcsasorlok}, Fredholm alternative, and elliptic regularity prove the result.
\end{proof}

We can repeat the above procedure to get the following result.

\begin{lemma}
Suppose $\Lambda_{\omega} \alpha = \cst$. Then, for each $m \in \mathbb{N}$ there exist $\phi_1 , \dots , \phi_m \in C^{\infty} (X , \rl)$ such that 
\begin{equation*}
\omega_m := \omega + \ai \ddbar (R^{-1}\phi_1 + \cdots + R^{-m} \phi_m)
\end{equation*}
satisfies
\begin{equation*}
S(\omega_m) - R \Lambda_{\omega_m} \alpha = \cst + R^{-m} f_{m,R}
\end{equation*}
for a function $f_{m,R} $ with average $0$ (with respect to $\omega_m$) which is bounded in $C^{\infty}(X,\rl)$ for all sufficiently large $R$.
\end{lemma}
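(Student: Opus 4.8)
The plan is to prove this by induction on $m$, using Lemma \ref{lemsolvoboR} as the engine at each step. The base case $m=1$ has essentially been carried out already: the computation above shows that $S(\omega_1) - R\Lambda_{\omega_1}\alpha = \cst + (S(\omega) - \bar{S}) + (\alpha, \ai\ddbar\phi_1)_{\omega} + O(1/R)$, and since $S(\omega) - \bar{S}$ has average zero (with respect to $\omega$), Lemma \ref{lemsolvoboR} produces $\phi_1$ with $(\alpha, \ai\ddbar\phi_1)_{\omega} = -(S(\omega) - \bar{S})$, so the two leading terms cancel and we are left with $\cst + R^{-1} f_{1,R}$ for the remainder. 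The only point needing care is that Lemma \ref{lemsolvoboR} demands average zero with respect to $\omega$, which is exactly what we have, and that $f_{1,R}$ must be shown to have average zero with respect to $\omega_1$ and to stay bounded in $C^\infty$ uniformly in $R$; the latter is because $f_{1,R}$ is built from derivatives of the fixed data $\omega, \alpha, \phi_1$ together with terms carrying explicit negative powers of $R$, and the former follows from Remark \ref{remavsclal}, since the average of $S(\omega_1) - R\Lambda_{\omega_1}\alpha$ over $X$ with respect to $\omega_1^n/n!$ is a purely cohomological quantity independent of the potential.

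For the inductive step, suppose we already have $\phi_1, \dots, \phi_{m}$ with $\omega_m = \omega + \ai\ddbar(R^{-1}\phi_1 + \cdots + R^{-m}\phi_m)$ satisfying $S(\omega_m) - R\Lambda_{\omega_m}\alpha = \cst + R^{-m} f_{m,R}$. Set $\omega_{m+1} := \omega_m + R^{-(m+1)}\ai\ddbar\phi_{m+1}$. Expanding $S(\omega_{m+1}) - R\Lambda_{\omega_{m+1}}\alpha$ in powers of $R^{-1}$ around $\omega_m$, the perturbation $R^{-(m+1)}\ai\ddbar\phi_{m+1}$ enters $-R\Lambda_{\omega_{m+1}}\alpha$ at order $R^{-m}$ through the term $R^{-m}(\alpha, \ai\ddbar\phi_{m+1})_{\omega_m}$, while its contribution to $S(\omega_{m+1})$ is of order $R^{-(m+1)}$ (the linearisation of $S$ is fourth order but carries no compensating power of $R$), hence negligible at the order we are correcting. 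Thus
\begin{equation*}
S(\omega_{m+1}) - R\Lambda_{\omega_{m+1}}\alpha = \cst + R^{-m}\big( f_{m,R} + (\alpha, \ai\ddbar\phi_{m+1})_{\omega_m} \big) + O(R^{-(m+1)}).
\end{equation*}
Since $f_{m,R}$ has average zero with respect to $\omega_m$ and (by the same cohomological argument as before, using $\Lambda_{\omega_m}\alpha = \cst$ is \emph{not} assumed but the average identity still holds) we may apply Lemma \ref{lemsolvoboR} --- but here is the subtlety: Lemma \ref{lemsolvoboR} is stated for the pair $(\omega, \alpha)$ with $\Lambda_\omega\alpha = \cst$, whereas we now need to solve $(\alpha, \ai\ddbar\phi_{m+1})_{\omega_m} = -f_{m,R}$ with $\omega_m$ in place of $\omega$, and $\Lambda_{\omega_m}\alpha$ is only $\cst + O(1/R)$, not exactly constant. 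I would handle this by observing that the operator $\phi \mapsto (\alpha, \ai\ddbar\phi)_{\omega_m}$ is still second order elliptic for all large $R$ (it is an $O(1/R)$ perturbation of $F_{\omega,\alpha}$ in the appropriate operator norm), and that its kernel and cokernel, by the same Fredholm-alternative argument as in Lemma \ref{lemsolvoboR}, remain one-dimensional (spanned by constants) for $R$ large --- or, more cleanly, solve the equation using $\omega$ and $F_{\omega,\alpha}$ as before and absorb the resulting $O(1/R)$ discrepancy between $(\alpha, \ai\ddbar\phi_{m+1})_{\omega}$ and $(\alpha, \ai\ddbar\phi_{m+1})_{\omega_m}$ into the next-order remainder $f_{m+1,R}$. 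Either way, one obtains $\phi_{m+1}$, uniformly bounded in $C^\infty$, making the order-$R^{-m}$ term vanish.

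Finally, one collects the leftover into $R^{-(m+1)} f_{m+1, R}$, checks that $f_{m+1,R}$ is uniformly bounded in every $C^k$-norm for $R$ large (all the building blocks are derivatives of $\omega, \alpha$ and the fixed functions $\phi_1, \dots, \phi_{m+1}$, with coefficients that are polynomials in $R^{-1}$, and the inverses of $\omega_m$ appearing converge to those of $\omega$ in $C^k$), and verifies via Remark \ref{remavsclal} that its average with respect to $\omega_{m+1}$ is exactly zero. The main obstacle I anticipate is precisely the bookkeeping in this last step: making sure that the ``$O(R^{-(m+1)})$'' is genuinely uniform in $R$ in arbitrarily high Sobolev or Hölder norms (so that it survives into the inverse-function-theorem argument later), and tracking that the average-zero condition is preserved when we keep changing the reference metric from $\omega_m$ to $\omega_{m+1}$; this requires care but no new ideas beyond the cohomological interpretation of the averages and the ellipticity of $F_{\omega,\alpha}$.
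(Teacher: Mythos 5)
Your argument is correct and it relies on exactly the same two ingredients as the paper (Lemma~\ref{lemsolvoboR} plus the cohomological average-zero observation of Remark~\ref{remavsclal}), but it is organised somewhat differently. You run an honest induction, expanding at each step around the current approximate solution $\omega_m$, which forces you to confront the fact that $\Lambda_{\omega_m}\alpha$ is only $\cst + O(1/R)$ and that the operator $\phi \mapsto (\alpha, \ai\ddbar\phi)_{\omega_m}$ is therefore not literally the one covered by Lemma~\ref{lemsolvoboR}. The paper sidesteps this entirely by performing a \emph{single} expansion of $S(\omega_m) - R\Lambda_{\omega_m}\alpha$ at the fixed metric $\omega$ in powers of $1/R$: at each order $R^{-i}$ the new unknown $\phi_{i+1}$ enters as $(\alpha, \ai\ddbar\phi_{i+1})_\omega + B_i$ with the \emph{unchanging} reference metric $\omega$ and a lower-order term $B_i$ with $\omega$-average zero, so Lemma~\ref{lemsolvoboR} applies directly, order by order, with no drift in the reference metric to manage. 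Your ``more cleanly'' alternative --- solve using $\omega$ and push the $O(1/R)$ discrepancy into the next remainder --- is precisely what the paper's one-shot expansion does implicitly; the two arguments thus converge, with the paper's being shorter and yours being more explicit about the bookkeeping and about where the uniform-in-$R$ bounds on $f_{m,R}$ come from, which the paper largely leaves to the reader.
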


\begin{proof}
We simply expand $S(\omega_m) - R \Lambda_{\omega_m} \alpha$ at $\omega$ to get
\begin{align*}
S(\omega_m) - R \Lambda_{\omega_m} \alpha = \cst &+  (S(\omega) - \bar{S})  + (\alpha , \ai \ddbar \phi_1)_{\omega} \\
&+ \sum_{i=1}^{m-1} \frac{1}{R^i} \left( (\alpha , \ai \ddbar \phi_{i+1})_{\omega} + B_i \right) + O(R^{-m}) ,
\end{align*}
where each $B_i$ is a smooth function with average 0 (with respect to $\omega$) which depends only on $\phi_1 , \dots , \phi_{i}$. Thus, repeated application of Lemma \ref{lemsolvoboR} establishes the claimed result.
\end{proof}

Let $G_{m,R} $ be the solution to $\Delta_{\omega_m} G_{m,R} = f_{m,R}$. By the standard elliptic PDE theory (cf.~\cite{taylorpde}), we see that there exists a constant $C (\omega_m, p)$ depending on $\omega_m$ and $p \in \mathbb{N}$ such that the $L^2_p$-Sobolev norm of $G_m$ can be estimated as
\begin{equation*}
|| G_{m,R} ||_{p, \omega_m} \le C(\omega_m , p) || f_{m,R} ||_{{p-2} , \omega_m} ,
\end{equation*}
where the $L^2_p$-Sobolev norm $|| \cdot ||_{p , \omega_m}$ is defined with respect to $\omega_m$ (note that we may choose $p$ to be a sufficiently large integer). On the other hand, we have $ \omega - \omega_m = O( R^{-1})$, and hence we have
\begin{equation*}
|| G_{m,R} ||_{p, \omega} \le C (\omega , p) || f_{m,R} ||_{{p-2} , \omega} 
\end{equation*}
for a constant $C (\omega , p) >0$ which depends only on $\omega$ and $p$. Since $|| f_{m,R} ||_{{p-2} , \omega}$ can be bounded by a constant uniformly of $R$, we finally see that $|| G_{m,R} ||_{p, \omega}$ can be bounded by a constant uniformly of $R$, for each $p \in \mathbb{N}$.

We now define
\begin{equation*}
\alpha_m := \alpha + \frac{\ai }{R^m} \ddbar G_{m,R} (\omega_{m}),
\end{equation*}
and observe that it satisfies the equation
\begin{equation*}
S(\omega_m) - R \Lambda_{\omega_m} \alpha_m = \cst ,
\end{equation*}
as in Proposition \ref{trivsolprop}. The above argument implies $ \alpha - \alpha_m =O( R^{-m})$, and hence we have obtained the claimed $R \alpha_m$-twisted cscK metric for $\alpha_m$ that is different from $\alpha$ by order $1/ R^m$. The next step in the proof, which we discuss immediately in the next section, is to apply the Banach space inverse function theorem to perturb the $R \alpha_m$-twisted cscK metric $\omega_m$ to an $R \alpha$-twisted cscK metric, similarly to the argument in \cite{finesurfaces} and \cite{lebsim}.

%Since $\omega_m$ moves in a compact subset in $C^{\infty}$ for all sufficiently large $R$, $G_{m,R} (\omega_m)$ should be bounded in $C^{\infty}$. Then observe that
%\begin{equation*}
%S(\omega_m) - R \Lambda_{\omega_m} \alpha_m = \cst 
%\end{equation*}
%if we define
%\begin{equation*}
%\alpha_m := \alpha + \frac{\ai }{R^m} \ddbar G_{m,R} (\omega_{m}),
%\end{equation*}
%as we did in Proposition \ref{trivsolprop}. We have thus constructed the approximate solutions that we seek, and are now ready to apply the inverse function theorem.

\subsection{Applying the inverse function theorem} \label{appinvfnth}

%The argument that follows is similar to \cite{lebsim}. 

We recall the following well-known theorem.

\begin{theorem} 
\emph{(Quantitative inverse function theorem; cf.~\cite[Theorem 4.1]{finesurfaces})} \label{qinvfth}
Suppose that $B_1$, $B_2$ are Banach spaces, $U \subset B_1$ is an open set containing the origin, and that
\begin{enumerate}
\item $T : U \to B_2$ is a differentiable map whose derivative at $0$, $DT |_0$, is an isomorphism of Banach spaces, with the inverse $P$;
\item $\delta'$ is the radius of the closed ball in $B_1$, centred at $0$, on which $T- DT$ is Lipschitz, with constant $1/ (2 ||P||_{op})$;
\item $\delta := \delta' / (2 ||P||_{op})$.
\end{enumerate}
Then whenever $y \in B_2$ satisfies $|| y- T(0) || < \delta$, there exists $x \in U$ with $T(x) = y$. Moreover, such an $x$ is unique subject to the constraint $||x|| < \delta' $.
\end{theorem}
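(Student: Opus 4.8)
The plan is to deduce the theorem from the Banach fixed point theorem applied to a suitable Newton-type map. Write $N := T - DT|_0$ for the nonlinear remainder, so that $N(0) = T(0)$ and, by hypothesis, $N$ is Lipschitz with constant $1/(2||P||_{op})$ on the closed ball $\bar{B}(0,\delta')$, which we take to lie inside the domain $U$. Since $DT|_0 = P^{-1}$, the equation $T(x) = y$ is equivalent to $P^{-1}(x) = y - N(x)$, i.e.\ to the fixed-point equation $x = \Phi(x)$ for the map
\[
\Phi(x) := P\bigl(y - N(x)\bigr),
\]
which is well-defined for $x \in \bar{B}(0,\delta')$. It therefore suffices to show that $\Phi$ is a contraction of $\bar{B}(0,\delta')$ into itself; note that $\bar{B}(0,\delta')$ is a complete metric space, being a closed subset of the Banach space $B_1$.

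First I would check the contraction property. Since $\Phi(x_1) - \Phi(x_2) = P\bigl(N(x_2) - N(x_1)\bigr)$, the operator bound for $P$ together with the Lipschitz bound for $N$ gives $||\Phi(x_1) - \Phi(x_2)|| \le ||P||_{op} \cdot \tfrac{1}{2||P||_{op}} ||x_1 - x_2|| = \tfrac12 ||x_1 - x_2||$ on $\bar{B}(0,\delta')$. This step is immediate and involves neither $y$ nor $\delta$.

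Next I would verify that $\Phi$ maps $\bar{B}(0,\delta')$ into itself; this is the one place where the precise value $\delta = \delta'/(2||P||_{op})$ enters. For $||x|| \le \delta'$ one estimates $||y - N(x)|| \le ||y - T(0)|| + ||N(0) - N(x)|| < \delta + \delta'/(2||P||_{op}) = \delta'/||P||_{op}$, using $||y - T(0)|| < \delta$ and the Lipschitz bound; multiplying by $||P||_{op}$ gives $||\Phi(x)|| < \delta'$, so in fact $\Phi$ lands in the open ball $B(0,\delta')$. The Banach fixed point theorem then produces a unique fixed point $x \in \bar{B}(0,\delta')$, which automatically satisfies $||x|| < \delta'$, lies in $U$, and solves $T(x) = y$. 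Uniqueness subject to $||x|| < \delta'$ follows at once, since any such solution is a fixed point of $\Phi$ in $\bar{B}(0,\delta')$, of which there is exactly one.

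The result is classical and I do not expect a genuine obstacle; the only points requiring care are that the closed ball of radius $\delta'$ should sit inside the domain $U$ (so that $N$, hence $\Phi$, is defined on it — this is implicit in the hypothesis that $T - DT$ is Lipschitz there), and the elementary bookkeeping of the two radii $\delta'$ and $\delta$ that makes the self-mapping property hold. One could equivalently phrase the same argument as a convergent Newton iteration $x_{k+1} = x_k - P\bigl(T(x_k) - y\bigr)$ started at $x_0 = 0$, but the fixed-point formulation above is the cleanest.
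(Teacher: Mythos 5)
Your proof is correct; it is the standard Banach fixed-point argument for this statement, and the paper itself gives no proof, merely citing \cite{finesurfaces}, where the argument is essentially the one you present. The two small points you flag (that the closed $\delta'$-ball must sit inside $U$, and the bookkeeping $\delta+\delta'/(2\|P\|_{op})=\delta'/\|P\|_{op}$ that yields the strict inequality $\|\Phi(x)\|<\delta'$) are exactly the right ones, and both are handled correctly.
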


%For notational convenience, we write $\omega'$ for $\omeg_m$ and $\alpha'$ for $\alpha_m$ in what follows.

Suppose that we write, as before, $L^2_p$ for the Sobolev completion of $C^{\infty} (X , \rl)_0$ and $\Omega^{1,1}$ for the set of $(1,1)$-forms on $X$ completed by the $L^2_p$-Sobolev norm $|| \cdot ||_p$. All Sobolev norms\footnote{When we consider the elliptic estimates of the operator $-  \mathfrak{D}_{m}^* \mathfrak{D}_{m} + R F_{m}$ it is natural to use the norm defined by $\omega_m$, but $\omega - \omega_m = O(R^{-1})$ allows us to absorb the difference in the constant in the estimate.} are defined with respect to the \kah metric $\omega$, and we shall take $p>0$ to be sufficiently large. We now take $B_1 :=\Omega^{1,1} \times L^2_{p+4}  $, $U:= \{ (\epsilon, \phi) \in B_1 \mid \omega_m + \ai \ddbar \phi >0 \}$, $B_2 :=\Omega^{1,1} \times  L^2_p$ in Theorem \ref{qinvfth}, and define
\begin{equation} \label{defonftepsphi}
T (\epsilon ,\phi ) : = (\alpha_m + \epsilon ,  S(\omega_{m , \phi} ) - R \Lambda_{\omega_{m , \phi}}( \alpha_m + \epsilon))
\end{equation}
where $\omega_{m , \phi}:= \omega_m + \ai \ddbar \phi$; this means that $0 \in B_1$ is identified with $(\alpha_m , \omega_m) $. Since $T$ depends on $R$, we shall write $T_R$ for $T$ in what follows.

\begin{notation}
For notational convenience, we shall write $\Lambda_m$ for $\Lambda_{\omega_m}$, $\mathfrak{D}_{m}^* \mathfrak{D}_{m}$ for $\mathfrak{D}_{\omega_m}^* \mathfrak{D}_{\omega_m}$, and $F_m$ for $F_{\omega_m , \alpha_m}$ in what follows.
\end{notation}

%and $T (\alpha ,\phi ) : = (\alpha , \frac{1}{R} S(\omega_{\phi}) - \Lambda_{\omega_{\phi}} \alpha )$. 

Since $\omega_m$ is $R \alpha_m$-twisted cscK, the equation (\ref{linatcsckeq}) implies that we have
\begin{equation} \label{lintrepsphi}
DT_R|_{0}  (\epsilon, \phi )=
\begin{pmatrix}
1 & 0 \\
\Lambda_{m} &-  \mathfrak{D}_{m}^* \mathfrak{D}_{m} + R F_{m} 
\end{pmatrix}
\begin{pmatrix}
\epsilon \\
\phi
\end{pmatrix}.
\end{equation}
Lemma \ref{lemlinojfcsasorlok} implies
\begin{equation*}
\int_X \phi ( - \mathfrak{D}_{m}^* \mathfrak{D}_{m} \phi + F_{m} (\phi)) \frac{\omega_m^n}{n!} = - \int_X || \bar{\partial} \mathrm{grad}^{1,0}_{\omega_m} \phi ||^2_{\omega_m}  \frac{\omega_m^n}{n!}   - \int_X || \xi_{\phi}||^2_{\alpha_m} \frac{\omega_m^n}{n!} ,
\end{equation*}
and hence the kernel of $- \mathfrak{D}_{m}^* \mathfrak{D}_{m} + R F_{m} : L^2_{p+4} \to L^2_p$ must be zero. Since this operator is elliptic and $X$ is compact without boundary, the Fredholm alternative implies that $- \mathfrak{D}_{m}^* \mathfrak{D}_{m} + R F_{m} $ is surjective (cf.~Lemma \ref{lemsolvoboR}). Thus $DT_R |_0 : B_1 \to B_2$ is an isomorphism whose inverse $P = P_R$ is given by
\begin{equation*}
P_R  (\epsilon, \phi )=
\begin{pmatrix}
1 & 0 \\
- \left( -  \mathfrak{D}_{m}^* \mathfrak{D}_{m} + R F_{m} \right)^{-1} \Lambda_{m} & \left( -  \mathfrak{D}_{m}^* \mathfrak{D}_{m} + R F_{m} \right)^{-1}
\end{pmatrix}
\begin{pmatrix}
\epsilon \\
\phi
\end{pmatrix}.
\end{equation*}

\begin{remark}
We recall that, in fact, the kernel of the linearisation of $\phi \mapsto S(\omega_{\phi}) -  \Lambda_{\omega_{\phi}} \alpha$ is trivial if $\omega$ is only assumed to be \textit{sufficiently close} to an $\alpha$-twisted cscK metric \cite[Lemma 4.3]{chentwisted}.
\end{remark}

To evaluate the operator norm $||P_R||_{op}$ of the inverse $P_R$, it suffices to evaluate $||(  -  \mathfrak{D}_{m}^* \mathfrak{D}_{m} + R F_{m})^{-1} ||_{op}$. Recalling the proof of the fundamental elliptic estimate (e.g.~\cite[Theorem 11.1 in Chapter 5]{taylorpde}), we see that there exists a constant $C_1= C_1 ( \alpha , \omega , p)$ independent of $R$ such that
\begin{equation*}
|| (  -  \mathfrak{D}_{m}^* \mathfrak{D}_{m} + R F_{m})^{-1} \phi ||_{p+4} \le R C_1 \left( || \phi ||_{p} + ||(  -  \mathfrak{D}_{m}^* \mathfrak{D}_{m} + R F_{m})^{-1} \phi ||_{L^2 } \right),
\end{equation*}
by also recalling $\omega_m -  \omega = O(R^{-1})$ and $\alpha_m - \alpha = O(R^{-m})$.

%To evaluate the operator norm $||P_R||_{op}$ of the inverse $P_R$, it suffices to evaluate the first eigenvalue of $- \frac{1}{R} \mathfrak{D}_{m}^* \mathfrak{D}_{m} + F_{m}$. 

We also have the following lemma.

\begin{lemma}
Let $\lambda_{1,m} <0$ be the largest non-zero eigenvalue of $-  \mathfrak{D}_{m}^* \mathfrak{D}_{m} + RF_{m}$. Then there exists a constant $C_2 = C_2  (\alpha , \omega) >0 $ such that $\lambda_{1,m} < -C_2 R$ for all large enough $R$.
\end{lemma}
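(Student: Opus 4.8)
The plan is to read the bound off from the Rayleigh quotient characterisation of $\lambda_{1,m}$ together with Lemma~\ref{lemlinojfcsasorlok}. The operator $-\mathfrak{D}_{m}^*\mathfrak{D}_{m} + RF_{m}$ is a fourth order elliptic operator which is self-adjoint for the $L^2$-inner product defined by $\omega_m$, is negative semi-definite, and has kernel equal to the constants; hence its largest non-zero eigenvalue is
\begin{equation*}
\lambda_{1,m} = \sup \left\{ \frac{\int_X \phi \left( -\mathfrak{D}_{m}^*\mathfrak{D}_{m}\phi + RF_{m}(\phi) \right) \frac{\omega_m^n}{n!}}{\int_X \phi^2 \, \frac{\omega_m^n}{n!}} \ : \ \phi \in C^{\infty}(X , \rl), \ \int_X \phi \, \frac{\omega_m^n}{n!} = 0, \ \phi \not\equiv \cst \right\} .
\end{equation*}
By Lemma~\ref{lemlinojfcsasorlok} applied to $\omega_m$ and $\alpha_m$, together with $\int_X \phi \, \mathfrak{D}_{m}^*\mathfrak{D}_{m}\phi \, \frac{\omega_m^n}{n!} = \| \mathfrak{D}_{m}\phi \|^2_{L^2(\omega_m)} \ge 0$, the numerator is bounded above by $-R\int_X \| \xi_{\phi} \|^2_{\alpha_m} \, \frac{\omega_m^n}{n!}$, so it suffices to bound the quotient $\int_X \| \xi_{\phi} \|^2_{\alpha_m} \frac{\omega_m^n}{n!} \big/ \int_X \phi^2 \frac{\omega_m^n}{n!}$ away from $0$ uniformly in $R$.

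For this I would use two elementary uniform estimates. First, since $\alpha$ and $\omega$ are fixed \kah metrics on the compact manifold $X$, there is a constant $c_0 = c_0(\alpha , \omega) > 0$ with $\alpha \ge 2c_0\omega$ pointwise; since $\omega_m - \omega = O(R^{-1})$ and $\alpha_m - \alpha = O(R^{-m})$ in the $C^0$-norm, this gives $\alpha_m \ge c_0\omega_m$ for all large $R$, whence $\| \xi_{\phi} \|^2_{\alpha_m} \ge c_0 \| \xi_{\phi} \|^2_{\omega_m} = c_0 | \partial\phi |^2_{\omega_m}$ pointwise and therefore $\int_X \| \xi_{\phi} \|^2_{\alpha_m} \frac{\omega_m^n}{n!} \ge c_0 \int_X \phi \, \Delta_{\omega_m}\phi \, \frac{\omega_m^n}{n!}$ (recalling that $\Delta_{\omega_m}$ is the $\bar{\partial}$-Laplacian). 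Second, writing $\mu_{1,m} > 0$ for the first non-zero eigenvalue of $\Delta_{\omega_m}$, the Poincar\'e inequality gives $\int_X \phi \, \Delta_{\omega_m}\phi \, \frac{\omega_m^n}{n!} \ge \mu_{1,m} \int_X \phi^2 \, \frac{\omega_m^n}{n!}$ for $\phi$ of zero $\omega_m$-average, and since the first eigenvalue of the Laplacian varies continuously under $C^2$-perturbations of the metric while $\omega_m \to \omega$ in $C^{\infty}$, we have $\mu_{1,m} \ge \tfrac12 \mu_1(\omega)$ for all large $R$. Combining, $\lambda_{1,m} \le -Rc_0\mu_{1,m} \le -\tfrac12 c_0\mu_1(\omega)R$, so the lemma holds with any $C_2 < \tfrac12 c_0\mu_1(\omega)$, a constant depending only on $\alpha$ and $\omega$.

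I do not expect a genuine obstacle here: the content is simply that Lemma~\ref{lemlinojfcsasorlok} converts the desired spectral gap into the positivity of the first eigenvalue of $\Delta_{\omega_m}$ and the pointwise metric comparison $\alpha_m \gtrsim \omega_m$, both of which are standard. The only mild subtlety is that the constants must be uniform in $R$, and this is immediate from the fact — already used repeatedly above — that $\omega_m \to \omega$ and $\alpha_m \to \alpha$ in $C^{\infty}$ as $R \to \infty$; a reader who prefers to avoid invoking continuity of eigenvalues can instead compare the Dirichlet energy and the $L^2$-norm of $\omega_m$ directly with those of $\omega$, the discrepancies being $O(R^{-1})$ relative to $\int_X \phi^2 \frac{\omega^n}{n!}$.
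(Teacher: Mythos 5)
Your proof is correct and follows essentially the same route as the paper's: apply Lemma~\ref{lemlinojfcsasorlok} to the Rayleigh quotient, discard the nonpositive $-\mathfrak{D}_m^*\mathfrak{D}_m$ contribution, compare $\alpha_m$ with $\omega_m$ pointwise to reduce to a Dirichlet energy, invoke the Poincar\'e inequality, and use $\omega_m - \omega = O(R^{-1})$, $\alpha_m - \alpha = O(R^{-m})$ to make all constants uniform in $R$. If anything, your version is slightly cleaner in that it makes the pointwise comparison $\alpha_m \ge c_0 \omega_m$ and the resulting constant explicit (the paper's passage from $C_3(\alpha,\omega,m)$ to $C_4(\alpha,\omega)$ reads, due to an apparent sign slip, as ``$C_4 \ge C_3$'' where it should be ``$\le$''), but the content is identical.
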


\begin{proof}
Let $\phi_{1,m} $ be an eigenfunction corresponding to $\lambda_{1,m}$. Then, by Lemma \ref{lemlinojfcsasorlok}, we have
\begin{align*}
\lambda_{1,m} &= \frac{1}{||\phi_{1,m}||^2_{L^2, \omega_m}} \int_X \phi_{1,m} \left( -  \mathfrak{D}_{m}^* \mathfrak{D}_{m} \phi_{1,m} + R F_{m} (\phi_{1,m}) \right) \frac{\omega_m^n}{n!} \\
&= \frac{1}{||\phi_{1,m}||^2_{L^2 , \omega_m}} \left( -  \int_X || \bar{\partial} \mathrm{grad}^{1,0}_{\omega_m} \phi_{1,m} ||^2_{\omega_m}  \frac{\omega_m^n}{n!}   - R \int_X || \xi_{\phi_{1,m}}||^2_{\alpha_m} \frac{\omega_m^n}{n!} \right) \\
& \le \frac{R}{||\phi_{1,m}||^2_{L^2 , \omega_m}} \left(   - \int_X || \xi_{\phi_{1,m}}||^2_{\alpha_m} \frac{\omega_m^n}{n!} \right) .
%&< C_{\alpha , \omega} \frac{- \int_X || \partial \phi||^2_{\omega} \frac{\omega^n}{n!}}{\int_X ||d \phi||^2_{\omega} \frac{\omega^n}{n!}} \\
%&< - \frac{C_{\alpha , \omega}}{4}
\end{align*}
Now observe that there exists a constant $C_3 = C_3 (\alpha , \omega , m) >0$ such that
\begin{equation*}
C_3  \int_X || \partial \phi_{1,m}||^2_{\omega_m} \frac{\omega_m^n}{n!} < \int_X || \xi_{\phi_{1,m}}||^2_{\alpha_m} \frac{\omega_m^n}{n!},
\end{equation*}
recalling that $\xi_{\phi_{1,m}}$ is the $\omega_m$-metric dual of $\partial \phi_{1,m}$. Since $\omega_m - \omega = O(R^{-1})$ and $\alpha_m - \alpha = O(R^{-m})$, we see that there exists a constant $C_4 = C_4 (  \alpha , \omega) >0$ such that $C_4  \ge C_3 (\alpha , \omega , m)$ uniformly of all large enough $R$. We thus get
\begin{equation*}
\lambda_{1,m} < - \frac{R C_4 }{||\phi_{1,m}||^2_{L^2 , \omega_m}} \int_X || \partial \phi_{1,m}||^2_{\omega_m} \frac{\omega_m^n}{n!} 
\end{equation*}
Now the Poincar\'e inequality yields
\begin{equation*}
||\phi_{1,m}||^2_{L^2 , \omega_m} < C_5 \int_X || d \phi_{1,m}||^2_{\omega_m} \frac{\omega_m^n}{n!} 
\end{equation*}
for a constant $C_5 = C_5 ({\omega, m}) >0$. As before, $\omega_m - \omega = O(R^{-1})$ implies that we can bound $C_5 ({\omega,m})$ by another positive constant $C_6 = C_6 ({\omega})$ uniformly of $R$. Thus we finally get
\begin{equation*}
\lambda_{1,m} < - \frac{ C_4 }{2 C_6} R % - \frac{C_6 ({\omega}) C_4 ({\alpha , \omega})}{2} R
\end{equation*}
as claimed.

%where we used $C_{\alpha, \omega} \int_X || \partial \phi||^2_{\omega} \frac{\omega^n}{n!} > \int_X || \xi_{\phi}||^2_{\alpha} \frac{\omega^n}{n!}$ for some constant $C_{\alpha, \omega}>0$ and the Poincar\'e inequality in the third line.
\end{proof}

Thus, again using $\omega_m - \omega = O(R^{-1})$, we have
\begin{align*}
||P_R  ||_{op} &= \sup_{\epsilon, \phi} \frac{||\epsilon||_{p} +||- \left( -  \mathfrak{D}_{m}^* \mathfrak{D}_{m}  + R F_{m} \right)^{-1} \Lambda_{m} \epsilon +\left( -  \mathfrak{D}_{m}^* \mathfrak{D}_{m} + R F_{m} \right)^{-1} \phi  ||_{p+4}}{||\epsilon||_{p} + ||\phi||_{p } } \\
&\le \sup_{\epsilon, \phi} \frac{||\epsilon||_{p} + || \left( -  \mathfrak{D}_{m}^* \mathfrak{D}_{m}  + R F_{m} \right)^{-1} \Lambda_{m} \epsilon ||_{p+4} + ||\left( -  \mathfrak{D}_{m}^* \mathfrak{D}_{m} + R F_{m} \right)^{-1} \phi  ||_{p+4}}{||\epsilon||_{p} + ||\phi||_{p}} \\
&< 1+ RC' \left( 1+ |\lambda_{1,m} |^{-1}  \right) \sup_{\epsilon , \phi} \frac{ || \Lambda_{m} \epsilon||_{p} + || \phi  ||_{p }}{||\epsilon||_{p} + ||\phi||_{p}} \\
&< 1+2 RC_1  \left(1 + \frac{1}{C_2R}\right) (1 + || \Lambda_{\omega}||_{op}),
\end{align*}
%which is bounded uniformly for all large enough $R>0$.
and hence there exists a constant $C'  = C' (\alpha, \omega, p) >0$ such that $||P_R  ||_{op} \le C' R$.

Recalling the definition (\ref{defonftepsphi}) of $T_R$, we see that for $l \ge 3$, $l \in \mathbb{N}$, and on a ball centred at $0 \in B_1$ with radius $\delta' := R^{-l}$, the operator $T_R - DT_R$ is Lipschitz with constant $1/ (2 ||P_R||_{op})$ for all large enough $R$. Thus we can choose 

%Recalling the definition (\ref{defonftepsphi}) of $T_R$, it is immediate that there exists $\delta' >0$ such that $T_R - DT_R$ is Lipschitz on a ball centred at $0 \in B_1$ with radius $\delta'$, with constant $1/ (2 ||P_R||_{op})$. Thus we can choose 
\begin{equation*}
\delta = \frac{\delta'}{ 2 || P_R||_{op}} > \frac{1}{2 C'} R^{-l-1} ,%> \cst. R \gg 1 ,
\end{equation*}
so that the quantitative inverse function theorem holds in the ball of radius $\delta = O(R^{-l-1})$ in $B_2$ centred at $T_R (0,0)$.

Writing $\bar{S}$ for the average of the scalar curvature and $c$ for the average of $\Lambda_{m} \alpha_m$ (cf.~Remark \ref{remavsclal}), we observe $T_R (0,0) = (\alpha_m ,  \bar{S} - R c)$. Since $\alpha_m - \alpha = O(R^{-m})$, we see that there exists a constant $C'' = C'' (\alpha , \omega, p)$ such that
\begin{equation*}
|| T_R (0 , 0) - (\alpha, \bar{S} -R c) ||_{L^2_p} < C'' R^{-m+1} %< \frac{1}{2C'} R^{-l-1} < \delta
\end{equation*}
for all large enough $R>0$, and note that we have
\begin{equation*}
C'' R^{-m+1} < \frac{1}{2C'} R^{-l-1} < \delta
\end{equation*}
for all large enough $R>0$, by taking $m$ to be sufficiently large. Thus, for all large enough $R>0$, there exists $(\epsilon, \phi) \in U \subset B_1$ such that $T_R (\epsilon , \phi) = (\alpha, \bar{S} - R c)$; in other words we have
\begin{equation*}
\begin{cases}
\alpha_m + \epsilon = \alpha \\
S(\omega_{m , \phi} ) - R \Lambda_{\omega_{m , \phi}} ( \alpha_m + \epsilon) = \bar{S} - Rc = \cst
\end{cases}
\end{equation*}
for some $(\epsilon , \phi ) \in \Omega^{1,1} \times L^2_{p+4}$ (note also that we have $\omega_{m, \phi}>0$ for all large enough $R>0$, since $||\phi||_{p+4} < \delta' = R^{-l}$). By taking $p$ to be sufficiently large and recalling the Sobolev embedding, we can use the elliptic regularity, as in \cite[Lemma 2.3]{finesurfaces}, to conclude that $\phi $ is in $C^{\infty} (X,\rl)$. This establishes all the statements claimed in Theorem \ref{mrtcsck}.

%Since $p$ could be any large enough integer, the uniqueness statement in Theorem \ref{qinvfth} and the natural embedding $L^2 _{p+1} \inj L^2_{p}$ (together with the Sobolev embedding) implies that $\phi $ is in $C^{\infty} (X,\rl)$ (alternatively we can argue as in \cite[Lemma 2.3]{finesurfaces} to conclude the regularity). This establishes all the statements claimed in Theorem \ref{mrtcsck}.

%As $\beta = \alpha$ is obvious, we thus see that there exists $\phi \in L^2_p$ such that $S(\omega_{\phi} ) - \Lambda_{\omega_{\phi}} \alpha = \cst$. 

%\begin{equation*}
%T_R (- R^{-m} \ai \ddbar G_{m,R} , 0) = (\alpha , \frac{1}{R} \bar{S} - c + O(R^{-m}))
%\end{equation*}

%On the other hand, $T_R ( \alpha' ,0) =( \alpha + \frac{1}{R} \ai \ddbar G(\omega),  \bar{S} + c)$, where $\Lambda_{\omega} \alpha=c$. Thus
%\begin{equation*}
%|| T_R (\alpha ', 0) - (\alpha, \bar{S} +c) ||_{L^2_p} < \frac{1}{R} C_{\omega}  < \delta
%\end{equation*}
%once $R>0$ is chosen to be sufficiently large. Thus there exists $(\beta, \phi)$ such that $T_R (\beta , \phi) = (\alpha, \bar{S}+c)$. As $\beta = \alpha$ is obvious, we thus see that there exists $\phi \in L^2_p$ such that $S(\omega_{\phi} ) - \Lambda_{\omega_{\phi}} \alpha = \cst$. Since $p$ could be any positive integer, the uniqueness statement in the inverse function theorem and the natural embedding $L^2 _{p+1} \inj L^2_{p}$ (together with the Sobolev embedding) implies that $\phi $ is in $C^{\infty} (X,\rl)$ (or argue as in \cite[Lemma 2.3]{finesurfaces}).

\section{Proof of Corollary \ref{cormroptwa}}
We now apply the argument in \S \ref{appinvfnth} to prove Corollary \ref{cormroptwa}. Suppose that we have an $\alpha'$-twisted cscK metric $\omega$. Using the notation from \S \ref{appinvfnth}, we define an operator $T' : U \to B_2$ by
\begin{equation*}
T'(\epsilon , \phi) = (\alpha ' + \epsilon , S(\omega_{\phi}) - \Lambda_{\omega_{\phi}}(\alpha' + \epsilon)) .
\end{equation*}
It suffices to show that the linearisation $DT' |_0$ of $T'$ at $0 \in B_1$ is invertible. Since $\omega$ is $\alpha'$-twisted cscK, we can prove the invertibility of $DT' |_0$ by arguing exactly as we did in \S \ref{appinvfnth} to show that the operator (\ref{lintrepsphi}) is invertible. Thus Theorem \ref{qinvfth}, applied for a sufficiently large $p$, immediately implies Corollary \ref{cormroptwa} by recalling the Sobolev embedding and the elliptic regularity (cf.~\cite[Lemma 2.3]{finesurfaces}).
%, which was also proved by X.X.~Chen \cite[Theorem 4.1]{chentwisted}.

%\begin{proof}[Proof of Corollary \ref{cormroptwa}]
%Suppose that we have an $\alpha'$-twisted cscK metric $\omega$. Using the notation from \S \ref{appinvfnth}, we define an operator $T' : U \to B_2$ by
%\begin{equation*}
%T'(\epsilon , \phi) = (\alpha ' + \epsilon , S(\omega_{\phi}) - \Lambda_{\omega_{\phi}}(\alpha' + \epsilon)) .
%\end{equation*}
%It suffices to show that the linearisation $DT' |_0$ of $T'$ at $0 \in B_1$ is invertible. Since $\omega$ is $\alpha'$-twisted cscK, we can prove the invertibility of $DT' |_0$ by arguing exactly as we did in \S \ref{appinvfnth} to show that the operator (\ref{lintrepsphi}) is invertible. Thus Theorem \ref{qinvfth}, applied for a sufficiently large $p$, immediately implies Corollary \ref{cormroptwa} by recalling the Sobolev embedding and the elliptic regularity (cf.~\cite[Lemma 2.3]{finesurfaces}).
%Observe
%\begin{equation*}
 %\int_X \phi ( - \mathfrak{D}_{\omega}^* \mathfrak{D}_{\omega} \phi + F_{\alpha'} (\phi)) \frac{\omega^n}{n!} = - \int_X || \bar{\partial} \mathrm{grad}^{1,0}_{\omega} \phi ||^2_{\omega}  \frac{\omega^n}{n!}   - \int_X || \xi_{\phi}||^2_{\alpha'} \frac{\omega^n}{n!} ,
%\end{equation*}
%which is non-zero if and only if $\phi$ is non-constant.
%\end{proof}

%\section{When $\alpha$ is semipositive}

%Suppose now that $\alpha$ is positive semidefinite. Most of what we discussed in \S \ref{linotcscke} carries over to this case, apart from that the kernel of $F_{\omega , \alpha}$ may be larger than the set of constant functions.

\bibliographystyle{amsplain}
\bibliography{2015_22_twisted}

\begin{flushleft}
DEPARTMENT OF MATHEMATICS, UNIVERSITY COLLEGE LONDON \\
Email: \verb|yoshinori.hashimoto.12@ucl.ac.uk|, \verb|yh292@cantab.net|
\end{flushleft}

\end{document}